\theoremstyle{plain}        \newtheorem{thm}{Theorem}
\theoremstyle{plain}        \newtheorem{pro}{Proposition}
\theoremstyle{plain}        \newtheorem{lem}{Lemma}
\theoremstyle{plain}        
\theoremstyle{plain}        
\theoremstyle{definition}   \newtheorem{defi}{Definition}
\begin{document}
	
\title[Fourier restriction to oscillating curves]{Restriction of the Fourier transform to some oscillating curves}
\author{Xianghong Chen \and Dashan Fan \and Lifeng Wang}
\address{X. Chen\\Department of Mathematical Sciences\\University of Wisconsin-Milwaukee\\Milwaukee, WI 53211, USA}
\email{chen242@uwm.edu}
\address{D. Fan\\Department of Mathematical Sciences\\University of Wisconsin-Milwaukee\\Milwaukee, WI 53211, USA}
\email{fan@uwm.edu}
\address{L. Wang\\Department of Mathematical Sciences\\University of Wisconsin-Milwaukee\\Milwaukee, WI 53211, USA}
\email{lifeng@uwm.edu, lifeng.wang.1987@gmail.com}

\subjclass[2010]{42B10, 42B99}
\keywords{Fourier restriction, affine arclength measure, oscillating curve}

\begin{abstract} Let $\phi$ be a smooth function on a compact interval $I$. Let
$$\gamma(t)=\left (t,t^2,\cdots,t^{n-1},\phi(t)\right ).$$
In this paper, we show that
$$\left(\int_I \big|\hat f(\gamma(t))\big|^q \big|\phi^{(n)}(t)\big|^{\frac{2}{n(n+1)}} dt\right)^{1/q}\le C\|f\|_{L^p(\mathbb R^n)}$$
holds in the range
$$1\le p<\frac{n^2+n+2}{n^2+n},\quad 1\le q<\frac{2}{n^2+n}p'.$$
This generalizes an affine restriction theorem of Sj\"olin \cite{Sjoelin1974} for $n=2$. Our proof relies on ideas of Sj\"olin \cite{Sjoelin1974} and Drury \cite{Drury1985}, and more recently Bak-Oberlin-Seeger \cite{BakOberlinSeeger2008} and Stovall \cite{Stovall2016}, as well as a variation bound for smooth functions.
\end{abstract}

\maketitle

\section{Introduction}
Let $\gamma: I\rightarrow\mathbb R^n$ be a smooth curve and let $f$ be a Schwartz function on $\mathbb R^n$. We are interested in understanding restriction bounds of the form
\begin{equation}\label{eq:restriction}
\left(\int_I \big|\hat f(\gamma(t))\big|^q w(t) dt\right)^{1/q}\le C\|f\|_{L^p(\mathbb R^n)}
\end{equation}
where $\hat f$ is the Fourier transform of $f$, $C$ is a constant independent of $f$, and $p,q\ge 1$. In this context, it is natural to take
\begin{equation}\label{eq:affine-arclength}
w(t)=|\tau(t)|^{\frac{2}{n(n+1)}}
\end{equation}
where
$$\tau(t)=\det[\gamma'(t),\cdots,\gamma^{(n)}(t)]$$
is the torsion of $\gamma$. The image of the measure $|\tau(t)|^{\frac{2}{n(n+1)}}dt$ under $\gamma$ is called the \textit{affine arclength measure} of $\gamma$, which possesses several invariance properties. We refer the reader to Guggenheimer \cite{Guggenheimer1977} for more background on this notion. In what follows, unless otherwise stated, we will always assume that $w(t)=|\tau(t)|^{\frac{2}{n(n+1)}}$.

When $\gamma$ is nondegenerate, that is when $\tau(t)$ is nonvanishing on $I$, it has been shown by Sj\"olin \cite{Sjoelin1974} (for $n=2$) and Drury \cite{Drury1985} (for $n\ge 3$)
that the restriction bound \eqref{eq:restriction} holds for
\begin{equation}\label{eq:closed-range}
1\le p<\frac{n^2+n+2}{n^2+n},\quad 1\le q\le \frac{2}{n^2+n}p'
\end{equation}
provided that $I$ is a compact interval. Here $p'=\frac{p}{p-1}$ is the conjugate exponent of $p$. The range of $p$ is sharp, for example when $\gamma$ is the moment curve
$$\gamma(t)=(t,t^2,\cdots,t^n).$$
See Arkhipov, Chubarikov and Karatsuba \cite{ArkhipovChubarikovKaratsuba2004}. Sharpness of the range of $q$ follows from Knapp's homogeneity argument.

\begin{figure}[t]
\includegraphics[scale=0.5]{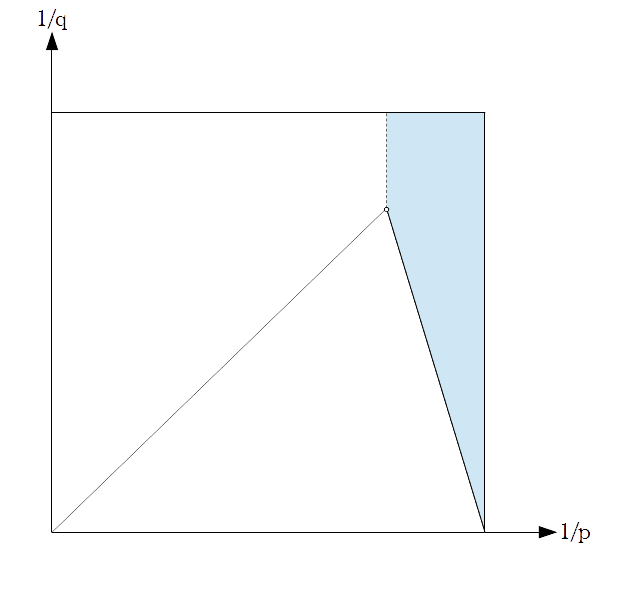}
\vspace{-0.7cm}
\caption{The shaded region corresponds to the range \eqref{eq:closed-range}.}
\end{figure}

For more general curves $\gamma$, it is harder to obtain \eqref{eq:restriction} in the whole range \eqref{eq:closed-range}. Partial results were obtained by Ruiz \cite{Ruiz1983}, Christ \cite{Christ1985}, Drury and Marshall \cite{DruryMarshall1985}, \cite{DruryMarshall1987} and Drury \cite{Drury1990} for some classes of finite-type curves. For monomial curves of the form
$$\gamma(t)=(t^{a_1},\cdots,t^{a_n})$$
(where $a_1,\cdots,a_n$ are distinct nonzero real numbers), sharp global results are due to Drury \cite{Drury1985} for $(a_1,\cdots,a_n)=(1,\cdots,n)$ and Bak, Oberlin and Seeger \cite{BakOberlinSeeger2009}, \cite{BakOberlinSeeger2013} for general exponents. In \cite{BakOberlinSeeger2013} a restricted strong type bound at the endpoint $p=\frac{n^2+n+2}{n^2+n}$ is also obtained. For curves that are perturbations of the monomial curves, sharp local results are due to Dendrinos and M\"uller \cite{DendrinosMueller2013}. Improving upon earlier work of Dendrinos and Wright \cite{DendrinosWright2008}, Stovall \cite{Stovall2016} obtained sharp global results for all polynomial curves, with bounds uniform over polynomials of given degree.

In this paper, we focus on \textit{smooth curves} of the form
\begin{equation}\label{eq:simple-curve}
\gamma(t)=\big(t,t^2,\cdots,t^{n-1},\phi(t)\big).
\end{equation}
For such curves one has $\tau(t)=C\phi^{(n)}(t)$ where $C>0$ is a dimensional constant.
Following \cite{DruryMarshall1985} and \cite{BakOberlinSeeger2008}, we will call such curves \textit{simple curves}. Note that if $\phi$ is so that $\gamma$ is of finite type
or is a perturbation of a monomial curve, sharp restriction bounds for $\gamma$ follow from the work of Dendrinos and M\"uller \cite{DendrinosMueller2013} mentioned above. If $\phi$ is a polynomial, a uniform bound at the endpoint $p=\frac{n^2+n+2}{n^2+n}$ is obtained by Bak, Oberlin and Seeger \cite{BakOberlinSeeger2013}.

In the case $n=2$, Sj\"olin \cite{Sjoelin1974} obtained sharp uniform bounds for all simple curves with convex $\phi$. For $n\ge 3$, Bak, Oberlin and Seeger \cite{BakOberlinSeeger2008} identified some general conditions on
$\phi$ that imply sharp uniform bounds. Their conditions in particular require $\phi^{(n)}(t)$ to be nondecreasing and to satisfy a geometric inequality.
As a consequence, one obtains sharp restriction bounds for the simple curves with
$$\phi(t)=e^{-t^{-\alpha}},\ \alpha>0.$$
See \cite[Section~4]{BakOberlinSeeger2008} for details.

For general simple curves in $n=2$ defined on compact $I$, Sj\"olin \cite{Sjoelin1974} showed that \eqref{eq:restriction} holds in the whole range \eqref{eq:closed-range} if one instead takes $w(t)=w_{\varepsilon}(t)$, where
$$w_{\varepsilon}(t):=|\tau(t)|^{\frac{2}{n(n+1)}+\varepsilon},\ \varepsilon>0.$$
Moreover, he showed that \eqref{eq:restriction} fails
with $w(t)=w_0(t)$ if $q=\frac{2}{n^2+n}p'<\infty$ and
\begin{equation}\label{eq:alpha-beta-curve}
\phi(t)=e^{-t^{-\alpha}}\sin(t^{-\beta})
\end{equation}
where $\alpha,\beta>0$ and $\beta$ is sufficiently large.

Our main result extends Sj\"olin's result to $n\ge 3$.\footnote{The same statement holds true for $n=2$ and $n=1$.}

\begin{thm}\label{intro:thm}
Let $n\ge 3$ and let $\gamma$ be a simple curve as in \eqref{eq:simple-curve} with $\phi$ defined on a compact interval $I$. Then the restriction bound \eqref{eq:restriction} holds in the following cases:

(i) $w(t)=w_\varepsilon(t)$, $\varepsilon>0$ and
\begin{equation}\label{eq:range-1}
1\le p<\frac{n^2+n+2}{n^2+n},\quad 1\le q\le\frac{2}{n^2+n}p';
\end{equation}

(ii) $w(t)=w_0(t)$ and
\begin{equation}\label{eq:range-2}
1\le p<\frac{n^2+n+2}{n^2+n},\quad 1\le q<\frac{2}{n^2+n}p'.
\end{equation}
\end{thm}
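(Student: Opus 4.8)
The plan is to reduce the estimate to the known restriction theorem for nondegenerate curves by decomposing $I$ into pieces on which, after an affine rescaling, $\gamma$ is uniformly comparable to the moment curve, and then summing the resulting bounds. Since $w$ vanishes on $Z:=\{t\in I:\phi^{(n)}(t)=0\}$ it is enough to estimate the integral over $I\setminus Z$. Fix a large integer $N=N(n,p,q)$, to be chosen at the end. First I would decompose $I\setminus Z$ into countably many intervals $J_k$ so that on each $J_k$: (a) $\phi^{(n)}$ has constant sign and $|\phi^{(n)}|$ varies by at most a factor $2$, say $|\phi^{(n)}|\sim\lambda_k$; and (b) $|J_k|^{m-n}|\phi^{(m)}(t)|\lesssim\lambda_k$ for all $n<m\le N$ and $t\in J_k$. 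This is done in three stages: split $I\setminus Z$ into the maximal intervals $O$ on which $\phi^{(n)}$ has one sign (the ``oscillations'' of $\phi^{(n)}$); split each $O$ into the subintervals where $|\phi^{(n)}|\sim 2^{-i}\Lambda_O$, with $\Lambda_O:=\sup_O|\phi^{(n)}|$; and split each of those into equal pieces short enough for (b) to hold, which costs a factor $\lesssim (2^{-i}\Lambda_O)^{-1/(N-n)}$ in the number of pieces.

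On a single piece $J_k=[a_k,a_k+h_k]$, reparametrize $t=a_k+h_k s$ and apply the affine map $A_k$ of $\mathbb R^n$ that sends $\gamma(a_k+h_k\,\cdot\,)$ to a curve $\Gamma_k(s)=(s,s^2,\dots,s^{n-1},\psi_k(s))$ with $\psi_k^{(j)}(0)=0$ for $j<n$ and $\psi_k^{(n)}$ normalized to be comparable to $1$; then (a)--(b) force $\|\psi_k\|_{C^N([0,1])}\lesssim 1$ and the torsion of $\Gamma_k$ to be comparable to $1$. For such $\Gamma_k$ the restriction bound in the closed range \eqref{eq:closed-range} holds with a constant depending only on $n,p,q,N$: for $n=2$ this is Sj\"olin's uniform bound for convex curves, and for $n\ge 3$ it follows from the arguments of Drury and of Bak--Oberlin--Seeger and Stovall, with $\|\psi_k\|_{C^N}\lesssim 1$ playing the role of ``bounded degree''. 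Transporting this back by the affine covariance of the Fourier transform — one has $|\hat f(A_k^{-1}\,\cdot\,)|=|\widehat{f_k}|$ with $\|f_k\|_p=|\det A_k|^{1/p'}\|f\|_p$ and $|\det A_k|\sim h_k^{-n(n+1)/2}\lambda_k^{-1}$ — and using $w\sim\lambda_k^{2/(n(n+1))+\varepsilon}$ on $J_k$ (with $\varepsilon>0$ in case (i), $\varepsilon=0$ in case (ii)), I obtain
\[
\int_{J_k}\big|\hat f(\gamma(t))\big|^q w(t)\,dt\ \lesssim\ h_k^{\,a}\lambda_k^{\,b}\,\|f\|_p^q,\qquad a:=1-\tfrac{n(n+1)q}{2p'},\quad b:=\tfrac{2}{n(n+1)}+\varepsilon-\tfrac{q}{p'}.
\]
The hypotheses give $a\ge 0$ and $b\ge 0$; in the range under consideration $b>0$ (since $\varepsilon>0$ in case (i), and since $q<\tfrac{2}{n(n+1)}p'$ in case (ii), which moreover gives $a>0$).

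It then remains to show $\sum_k h_k^{\,a}\lambda_k^{\,b}<\infty$, which I would prove by reversing the three stages of the decomposition. Summing the short pieces inside a set $\{|\phi^{(n)}|\sim 2^{-i}\Lambda_O\}\cap O$ produces a net power $(2^{-i}\Lambda_O)^{\,b'}$ with $b':=b-\tfrac{1-a}{N-n}$, which is positive once $N$ is chosen large enough (in case (ii) it suffices that $N>n+\tfrac{1-a}{b}$; in case (i) at the critical exponent, where $a=0$ and $b=\varepsilon$, one needs $N>n+\varepsilon^{-1}$). Summing over $i$ is geometric and, since the lengths of the sets $\{|\phi^{(n)}|\sim 2^{-i}\Lambda_O\}\cap O$ add up to $|O|$, yields a bound of the form $\Lambda_O^{\,b'}|O|^{\,\kappa}$ for suitable $b',\kappa>0$. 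Finally, summing over the oscillations $O$ is where smoothness of $\phi$ enters: since $\phi\in C^\infty(I)$, all derivatives of $\phi^{(n)}$ are bounded on the compact interval $I$, and a variation bound for smooth functions yields $\#\{O:\Lambda_O\ge\mu\}\lesssim_\delta\mu^{-\delta}$ for every $\delta>0$, hence $\sum_O\Lambda_O^{\,b'}\lesssim_{b'}1$; combined with $\sum_O|O|\le|I|$ and H\"older's inequality this gives $\sum_k h_k^{\,a}\lambda_k^{\,b}\lesssim_{b'}1$, completing the proof.

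The crux is the interaction between the rescaling and the summation at the critical exponent. With $\varepsilon=0$ and $q=\tfrac{2}{n(n+1)}p'$ the per-piece bound has $a=b=0$, so every piece contributes $\sim\|f\|_p^q$ regardless of its length and the sum over the infinitely many oscillations of $\phi^{(n)}$ diverges — this is exactly the mechanism behind Sj\"olin's counterexample, which is why the endpoint is excluded. It is the slack built into the hypotheses (a strictly subcritical $q$ in case (ii), or the extra power $\varepsilon$ in the weight in case (i)) that supplies the positive exponent $b'$ needed for $\sum_O\Lambda_O^{\,b'}$ to converge. Two points require the main work: establishing the variation bound $\#\{O:\Lambda_O\ge\mu\}\lesssim_\delta\mu^{-\delta}$ — which I expect to prove by iterating the elementary observation that between several consecutive ``$\mu$-large'' local extrema of $\phi^{(n)}$ a higher derivative of $\phi^{(n)}$ is forced to be large, and invoking the boundedness of those derivatives on $I$ — and verifying that the restriction estimate for the model curves $\Gamma_k$ is genuinely uniform over the class $\{\psi:\|\psi\|_{C^N([0,1])}\lesssim 1,\ \psi^{(n)}\sim 1\}$ for a finite $N$ depending only on $p$ and $q$, which needs quantitative versions of the Drury / Bak--Oberlin--Seeger / Stovall arguments.
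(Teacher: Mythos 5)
Your overall strategy is the right one and mirrors the paper's: decompose $I$ according to the size of $\phi^{(n)}$, rescale each piece, invoke a uniform restriction bound on the rescaled pieces, and then sum the contributions using a quantitative ``variation bound'' for $\phi^{(n)}$ that you correctly conjecture can be proved by forcing higher derivatives to be large between many near-extrema and iterating. The variation bound you describe is exactly the paper's Lemma~\ref{intro:variation} (generalized in Lemma~\ref{lem:tempered-variation}), and your discussion of why the endpoint $w=w_0$, $q=\tfrac{2}{n^2+n}p'$ must fail is also on point.

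There are, however, two substantive gaps where your proposal diverges from what actually works.

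First, you rescale each piece $J_k$ both in torsion \emph{and} in length, so as to produce a model curve on $[0,1]$ with $\|\psi_k\|_{C^N}\lesssim 1$, and you then appeal to a (claimed) uniform restriction theorem over that $C^N$-bounded class. This is genuinely nontrivial: Drury's Theorem~2 in \cite{Drury1985} gives constants depending on more of the curve than a torsion bound, and Stovall's uniformity in \cite{Stovall2016} is over \emph{polynomials} of bounded degree; going from that to ``$\|\psi\|_{C^N}\lesssim 1$'' requires controlling a nonpolynomial remainder throughout the Drury iteration, which is exactly where the real work lies and which your sketch defers. The paper sidesteps this entirely with Lemma~\ref{intro:lemma}: for \emph{simple} curves, only the bound $1/2\le |\phi^{(n)}|\le 1$ is needed (no control on higher derivatives, no length normalization), because the offspring curves $\Phi_\alpha=\tfrac1N\sum_k\phi(\cdot+\alpha_k)$ inherit the same two-sided bound on the $n$-th derivative, so the Jacobian lower bound \eqref{eq:jacobian} holds uniformly at every stage of the iteration. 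This makes your whole condition~(b) and the short-piece subdivision unnecessary: once Lemma~\ref{intro:lemma} is available, one may take $I_{k,j}$ to be (essentially) the connected components of the torsion level set, of arbitrary length.

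Second, even granting your uniform estimate on the $C^N$ class, the bookkeeping has a hole. You split $I\setminus Z$ into sign intervals $O$, split each $O$ into level sets $\{|\phi^{(n)}|\sim 2^{-i}\Lambda_O\}$, and then ``split each of those into equal pieces short enough for (b).'' But $\{|\phi^{(n)}|\sim 2^{-i}\Lambda_O\}\cap O$ is generally a union of many intervals, and the number of such components can grow as $i\to\infty$ (as $\lambda_k\to 0$) — this growth is precisely what the variation bound controls, and it has to be invoked at \emph{every torsion level} $\lambda_k$, not merely once for the sign intervals $O$ as in your last paragraph. Your estimate $\sum_i\ell_i(2^{-i}\Lambda_O)^{b'}\lesssim\Lambda_O^{b'}|O|$ is only valid if you ignore the (potentially unbounded) number of components at each $i$; in fact, absorbing these additional pieces requires the bound $N_k\lesssim_\delta 2^{\delta k}$ applied at each $k$, which is how the paper organizes the sum (see the display preceding Theorem~\ref{lem:Holder}). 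Your invocation of $\#\{O:\Lambda_O\ge\mu\}\lesssim_\delta\mu^{-\delta}$ is one instance of the variation bound, but it is not sufficient for the sum as you have set it up. In short: replace the $C^N$-rescaling step by the simple-curve Lemma~\ref{intro:lemma}, cover each level set $\{|\phi^{(n)}|\sim 2^{-k}\}$ by intervals on which the torsion is comparable using the variation lemma directly, and sum — the combination $N_k 2^{-k(2/(n^2+n)+\varepsilon)}2^{kq/p'}$ then converges under exactly the hypotheses of the theorem.
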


Case $(ii)$ is not explicitly formulated in Sj\"olin \cite{Sjoelin1974}, but follows easily from the treatment for case $(i)$. See also Drury and Marshall \cite[Theorem~1]{DruryMarshall1985} for a similar formulation.

The sharpness of Theorem \ref{intro:thm} can be justified by Sj\"olin's oscillating curves.

\begin{pro}\label{intro:prop}
Let $n\ge 2$ and $\alpha,\beta>0$. Let $\gamma$ be the simple curve as in \eqref{eq:simple-curve} with $\phi$ given by \eqref{eq:alpha-beta-curve} and defined on $I=[0,1]$. Then the restriction bound \eqref{eq:restriction} holds in cases $(i)$ and $(ii)$ of Theorem \ref{intro:thm}. Moreover, \eqref{eq:restriction} fails if $w(t)=w_0(t)$, $q=\frac{2}{n^2+n}p'<\infty$, and $\beta>\frac{(n+1)}{2}\alpha$.
\end{pro}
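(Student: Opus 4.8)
The positive assertion is immediate. Since $\alpha,\beta>0$, the function $\phi(t)=e^{-t^{-\alpha}}\sin(t^{-\beta})$ extends to a $C^\infty$ function on $[0,1]$ with $\phi^{(k)}(0)=0$ for every $k$, because $e^{-t^{-\alpha}}$ decays faster than any negative power of $t$ generated by differentiating $\sin(t^{-\beta})$ and the exponent; thus $\gamma$ is a simple curve with $C^\infty$ data on a compact interval and Theorem \ref{intro:thm} (for $n\ge 3$), together with its $n=2$ counterpart, applies verbatim to give \eqref{eq:restriction} in cases $(i)$ and $(ii)$. It remains to prove the failure statement, and here the plan is to run a Knapp-type example concentrated near the accumulation point $t=0$, where the oscillating curve is extremely flat.

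Fix a small $\rho>0$ and examine $\gamma$ on $J_\rho=[0,\rho]$. There $t^j\in[0,\rho^j]$ for $j=1,\dots,n-1$, while $|\phi(t)|\le e^{-t^{-\alpha}}\le e^{-\rho^{-\alpha}}=:\sigma$ since $e^{-t^{-\alpha}}$ is increasing; hence $\gamma(J_\rho)\subset B_\rho:=[0,\rho]\times[0,\rho^2]\times\cdots\times[0,\rho^{n-1}]\times[-\sigma,\sigma]$, a box of volume $|B_\rho|\asymp\rho^{n(n-1)/2}\sigma$. Let $\chi$ be a nonnegative smooth bump with $\chi\equiv1$ on $B_\rho$ and $\operatorname{supp}\chi\subset2B_\rho$, and set $\hat f=\chi$; then $f$ is Schwartz, and by the usual computation $|f|\asymp|B_\rho|$ on the dual box $B_\rho^{*}$ (of volume $\asymp|B_\rho|^{-1}$) with rapid decay off it, so $\|f\|_{L^p(\mathbb R^n)}\lesssim|B_\rho|^{1/p'}\asymp\big(\rho^{n(n-1)/2}\sigma\big)^{1/p'}$. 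Meanwhile $\hat f(\gamma(t))\ge1$ on $J_\rho$, so the left side of \eqref{eq:restriction} with $w=w_0$ is $\gtrsim\big(\int_{J_\rho}|\phi^{(n)}(t)|^{\theta}\,dt\big)^{1/q}$, where $\theta:=\tfrac{2}{n(n+1)}$ (using $\tau=C\phi^{(n)}$).

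The crux is a lower bound for $\int_{J_\rho}|\phi^{(n)}|^{\theta}$. Differentiating $n$ times and keeping the dominant contribution — every derivative falling on $\sin(t^{-\beta})$ and producing a factor $\asymp\beta t^{-\beta-1}$ — one obtains $\phi^{(n)}(t)=e^{-t^{-\alpha}}\big[(\pm\beta^{n})t^{-n(\beta+1)}g(t^{-\beta})+E(t)\big]$, where $g$ is one of $\pm\sin,\pm\cos$ and the error satisfies $|E(t)|\lesssim t^{\beta-\alpha}\cdot\beta^{n}t^{-n(\beta+1)}$; here $\beta>\tfrac{n+1}{2}\alpha\ge\alpha$ is used to discard the terms in which a derivative lands on $e^{-t^{-\alpha}}$. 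Consequently, on $F_\rho:=\{t\in J_\rho:|g(t^{-\beta})|\ge\tfrac12\}$ and for $\rho$ small, $|\phi^{(n)}(t)|\gtrsim\beta^{n}t^{-n(\beta+1)}e^{-t^{-\alpha}}$. To capture the correct exponential rate I would integrate only over the window $W_\rho:=[\rho(1-\rho^{\alpha}),\rho]$: on $W_\rho$ one has $t^{-\alpha}\le\rho^{-\alpha}(1+O(\rho^{\alpha}))$, hence $e^{-\theta t^{-\alpha}}\gtrsim e^{-\theta\rho^{-\alpha}}$, while $t^{-n(\beta+1)\theta}\ge\rho^{-n(\beta+1)\theta}$; moreover $|F_\rho\cap W_\rho|\gtrsim|W_\rho|\asymp\rho^{1+\alpha}$, because the spacing of the (near-)zeros of $g(t^{-\beta})$ near $\rho$ is $\asymp\rho^{1+\beta}\ll\rho^{1+\alpha}$ (again using $\beta>\alpha$). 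Altogether $\int_{J_\rho}|\phi^{(n)}|^{\theta}\gtrsim\rho^{1+\alpha-n(\beta+1)\theta}e^{-\theta\rho^{-\alpha}}=\rho^{1+\alpha-\frac{2(\beta+1)}{n+1}}e^{-\frac{2}{n(n+1)}\rho^{-\alpha}}$.

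Finally, specialize to the endpoint $q=\frac{2}{n^2+n}p'=\theta p'$. Then the exponential factor $e^{-\theta\rho^{-\alpha}/q}=e^{-\rho^{-\alpha}/p'}$ arising from the left side cancels exactly the factor $\sigma^{1/p'}=e^{-\rho^{-\alpha}/p'}$ in the bound for $\|f\|_{L^p}$ — this is precisely where the endpoint value of $q$ enters — and after the cancellation the left side of \eqref{eq:restriction} divided by $\|f\|_{L^p(\mathbb R^n)}$ is $\gtrsim\rho^{\frac{n}{2p'}((n+1)\alpha-2\beta)}$. When $\beta>\frac{n+1}{2}\alpha$ this exponent is strictly negative, so this ratio tends to $\infty$ as $\rho\to0^{+}$, no finite constant in \eqref{eq:restriction} can work, and the failure statement follows. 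The main obstacle is the analysis behind the third paragraph: one must control the oscillation of $g(t^{-\beta})$ (handled by restricting to $F_\rho$), dominate the error term $E(t)$, and — most delicately — pin down the exact constant in the exponential, which forces the thin window $W_\rho=[\rho(1-\rho^{\alpha}),\rho]$; replacing it by a fixed fraction of $J_\rho$ such as $[\rho/2,\rho]$ would inflate the constant in the exponent and destroy the cancellation that makes the endpoint fail.
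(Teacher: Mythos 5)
Your proof is correct and runs the same Knapp-type argument as the paper, but with a genuinely different derivation of the key integral lower bound, so a short comparison is worthwhile. Both proofs use the same test function: $\hat f$ is a smooth bump adapted to the box $[0,\delta]\times\cdots\times[0,\delta^{n-1}]\times[-e^{-\delta^{-\alpha}},e^{-\delta^{-\alpha}}]$, exploiting $|\phi(t)|\le e^{-t^{-\alpha}}$ to force $\hat f(\gamma(t))=1$ on $[0,\delta]$; both reduce failure at the endpoint $q=\tfrac{2}{n^2+n}p'$ to the same inequality, namely that $\int_0^{\delta}|\phi^{(n)}|^{2/(n(n+1))}\,dt$ cannot be bounded by $C\delta^{(n-1)/(n+1)}e^{-\frac{2}{n(n+1)}\delta^{-\alpha}}$. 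Where you diverge is in the lower bound on that integral (the paper's Lemma~\ref{lem:sjolin-integral}). The paper substitutes $u=t^{-\beta}$, writes the trigonometric part in amplitude--phase form $\sqrt{P_0^2+Q_0^2}\cos(u+\theta(u))$, reduces to a comparison with a Riemann sum over periods of the cosine, and finishes by integrating by parts in the Gamma-type integral. You instead integrate only over the thin window $W_\delta=[\delta(1-\delta^\alpha),\delta]$ intersected with the set where the oscillatory factor $g(t^{-\beta})$ is bounded below, using $\beta>\alpha$ twice: once to dominate the error term $E(t)$ by the main term (both proofs need this), and once to ensure the period $\sim\delta^{1+\beta}$ of the oscillation is much shorter than $|W_\delta|\sim\delta^{1+\alpha}$, so the good set occupies a positive fraction of the window. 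Your route is more elementary (no change of variables or integration by parts) and makes transparent why the exponential constant must be captured exactly: the window has to be thin enough that $t^{-\alpha}=\delta^{-\alpha}+O(1)$ throughout, as you correctly observe. The paper's route, by handling all of $\int_0^{\delta}$ at once, yields a slightly cleaner stand-alone statement (Lemma~\ref{lem:sjolin-integral}) valid for every $\rho>0$, but the two arguments produce the same power $\delta^{1+\alpha-\frac{2(\beta+1)}{n+1}}$ in front of the exponential, hence the same threshold $\beta>\tfrac{n+1}{2}\alpha$.
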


Sj\"olin's result in $n=2$ relies on uniform restriction bounds for simple curves with convex $\phi$. In $n\ge 3$, we deduce Theorem \ref{intro:thm} based on uniform restriction bounds for simple curves with essentially constant torsion.

\begin{lem}\label{intro:lemma}
Let $n\ge 3$ and let $\gamma$ be a simple curve as in \eqref{eq:simple-curve} with $\phi$ satisfying
\begin{equation}\label{eq:nondegenerate}
1/2\le |\phi^{(n)}(t)|\le 1,\ t\in I.
\end{equation}
Then the restriction bound \eqref{eq:restriction} holds for
\begin{equation}\label{eq:endline}
1\le p<\frac{n^2+n+2}{n^2+n},\quad
q=\frac{2}{n^2+n}p'
\end{equation}
and for a constant $C$ depending only on $p$ and $n$.
\end{lem}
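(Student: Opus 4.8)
To prove Lemma~\ref{intro:lemma} the plan is to normalize the curve by affine invariance and then carry out the restriction argument of Sj\"olin \cite{Sjoelin1974} and Drury \cite{Drury1985} (as revisited in \cite{BakOberlinSeeger2008}, \cite{BakOberlinSeeger2013} and \cite{Stovall2016}), the point being that the only curve-dependent ingredient is the size of the torsion, so that every constant is dimensional. \textbf{Normalization.} The affine-arclength restriction estimate is invariant under reparametrizations of $I$ and, on the precise line $q=\frac{2}{n(n+1)}p'$, under invertible affine maps of $\mathbb R^n$ --- the Jacobian of the linear part cancels against the weight $w=|\tau|^{2/(n(n+1))}$, and translations are free. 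Reparametrizing $t\mapsto a+|I|t$, applying an affine map that restores the first $n-1$ coordinates to $(s,\dots,s^{n-1})$, and rescaling the last coordinate, we may assume $I=[0,1]$, $\gamma(s)=(s,\dots,s^{n-1},\phi(s))$, and (replacing $\phi$ by $-\phi$ if needed) $1/2\le\phi^{(n)}\le1$ on $[0,1]$ with $\phi^{(n)}>0$; then $w\sim_n 1$. By duality it suffices to prove, for $p,q$ as in \eqref{eq:endline},
$$\|Eg\|_{L^{p'}(\mathbb R^n)}\le C(p,n)\,\|g\|_{L^{q'}([0,1],\,w\,ds)},\qquad Eg(x)=\int_0^1 g(s)\,e^{2\pi i x\cdot\gamma(s)}\,w(s)\,ds.$$

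\textbf{The one geometric input.} For the map $\Phi(s_1,\dots,s_n)=\gamma(s_1)+\cdots+\gamma(s_n)$ one has $\det D\Phi(s)=\det[\gamma'(s_1),\dots,\gamma'(s_n)]$; since the first $n-1$ components of $\gamma'$ span the polynomials of degree $\le n-2$, a divided-difference computation gives $\det D\Phi(s)=c_n\big(\prod_{1\le i<j\le n}(s_j-s_i)\big)\,\phi^{(n)}(\xi)$ for some $\xi$ in the convex hull of $s_1,\dots,s_n$, whence by \eqref{eq:nondegenerate}
$$|\det D\Phi(s)|\;\sim\;\prod_{1\le i<j\le n}|s_j-s_i|.$$
In particular (writing $\mu$ for the image of $w\,ds$ under $\gamma$) the measure $g_1\mu\ast\cdots\ast g_n\mu$, being the image of $\prod_j g_j(s_j)w(s_j)\,ds$ under $\Phi$, is absolutely continuous, with density controlled on $\{s_1<\cdots<s_n\}$ by $C_n\,\prod_j|g_j(s_j(y))|\big/\prod_{i<j}|s_i(y)-s_j(y)|$. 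This is the only place the hypothesis on $\phi$ is used, and it is used with constants depending only on $n$; this is precisely what makes the final bound uniform in $\phi$.

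\textbf{Core estimates and interpolation.} Using the previous display, write $\prod_{j=1}^n Eg_j=\widehat{g_1\mu\ast\cdots\ast g_n\mu}$ and apply Hausdorff--Young followed by the change of variables above; this reduces the $n$-linear bound $\big\|\prod_{j=1}^n Eg_j\big\|_{L^{n+1}(\mathbb R^n)}\le C_n\prod_j\|g_j\|_{L^2(w\,ds)}$ to the subcritical multilinear Hardy--Littlewood--Sobolev inequality
$$\int_{[0,1]^n}\prod_{j=1}^n F_j(s_j)\prod_{1\le i<j\le n}|s_i-s_j|^{-1/n}\,ds\;\lesssim_n\;\prod_{j=1}^n\|F_j\|_{L^{2n/(n+1)}([0,1])}.$$
Together with the trivial bound $\|Eg\|_{L^\infty}\le\|g\|_{L^1(w\,ds)}$ this already yields \eqref{eq:restriction} on the line \eqref{eq:endline} for $1\le p\le\frac{n^2+n}{n^2+n-1}$. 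To reach all $p<\frac{n^2+n+2}{n^2+n}$, one upgrades the convolution estimate to a Lorentz-space (restricted weak-type) bound at the corner $p=q=\frac{n^2+n+2}{n^2+n}$ --- the mechanism of Sj\"olin and Drury, see also the uniform endpoint bounds of \cite{BakOberlinSeeger2013} --- and then real-interpolates this with the trivial endpoint $p=1$, $q=\infty$. All constants are dimensional apart from the harmless blow-up as $p$ approaches the corner, giving $C=C(p,n)$.

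\textbf{Main obstacle.} The crux is the gap-closing step: the $n$-linear estimate plus the trivial bound stop at $p=\frac{n^2+n}{n^2+n-1}<\frac{n^2+n+2}{n^2+n}$, because the Vandermonde kernel $\prod_{i<j}|s_i-s_j|^{-1}$ has a non-integrable diagonal singularity, so $\mu^{\ast n}$ fails to lie in $L^2$ and a plain Plancherel/Hausdorff--Young argument cannot reach the corner. Obtaining the Lorentz-endpoint estimate at the corner is the delicate part of the Sj\"olin--Drury argument; the one thing that needs checking there, just as in the geometric step above, is that this refinement uses nothing about $\phi$ beyond the torsion bounds \eqref{eq:nondegenerate} --- so that no dependence on higher derivatives of $\phi$, nor on $|I|$, enters the constant.
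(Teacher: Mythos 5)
Your geometric input is correct and matches the paper: the Jacobian of $\Phi(s_1,\dots,s_n)=\sum_j\gamma(s_j)$ factors as a Vandermonde product times $\phi^{(n)}(\xi)$ for some intermediate $\xi$ (the paper's generalized Rolle's theorem), and the assumption \eqref{eq:nondegenerate} makes this comparable to $\prod_{i<j}|s_i-s_j|$ with constants depending only on $n$. The multilinear/Hausdorff--Young step combined with the trivial $L^1\to L^\infty$ bound then gives the line \eqref{eq:endline} for $1\le p\le\frac{n^2+n}{n^2+n-1}$, which indeed agrees with the first step of the paper's argument. But you have not closed the gap to the full range, and the mechanism you invoke for doing so is not the one that works here.

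You propose to ``upgrade the convolution estimate to a Lorentz-space (restricted weak-type) bound at the corner $p=q=\frac{n^2+n+2}{n^2+n}$'' and then real-interpolate with the trivial endpoint. That is not the Sj\"olin--Drury mechanism, and it would in fact prove a strictly stronger statement than Lemma~\ref{intro:lemma}: a uniform restricted weak-type bound at the corner implies the open range by interpolation, yet no such uniform endpoint bound is known for simple curves with merely $1/2\le|\phi^{(n)}|\le1$ (it is known for polynomial $\phi$ via \cite{BakOberlinSeeger2013} and \cite{Stovall2016}, but those arguments use polynomial structure that is unavailable here). What the paper actually does is Drury's open-ended \emph{iteration}: after symmetrizing and changing variables $t_1=t$, $h_k=t_k-t_1$, the phase involves the averaged curve $t\mapsto\frac1n\sum_k\gamma(t+h_k)$, which is \emph{not} $\gamma$ itself, so one must feed a restriction bound for this new curve back into the argument. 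The key observation---and the missing idea in your proposal---is that every such ``offspring curve'' is again a simple curve, and its $n$-th coordinate $\Phi_\alpha=\frac1N\sum\phi(\cdot+\alpha_k)$ still satisfies $1/2\le|\Phi_\alpha^{(n)}|\le1$. The paper therefore proves \eqref{unweighted} \emph{uniformly over the whole family $\Upsilon$} by induction on $p'$: each pass moves $(1/p'_0,1/q'_0)$ affinely toward, but never onto, the corner $(\frac{2}{n^2+n+2},\frac{2}{n^2+n+2})$, with constants blowing up only as $p\uparrow\frac{n^2+n+2}{n^2+n}$. Without the observation that $\Upsilon$ is closed under the offspring operation and that the uniform hypothesis \eqref{eq:nondegenerate} is inherited, the induction does not close, and no single multilinear estimate plus interpolation reaches the stated range.
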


It is important to note that the constant $C$ is independent of $\phi$. Drury \cite[Theorem~2]{Drury1985} has obtained a similar result for all smooth curves defined on compact $I$, but with the constant $C$ depending on other information of the curve (besides the assumption \eqref{eq:nondegenerate}). Such dependence cannot be avoided in Drury's result as can be seen by taking $\gamma$ to be a nondegenerate closed curve (for the existence such curves, see Costa \cite{RodriguesCosta1990}). Note also that Lemma \ref{intro:lemma} follows from Theorem 1 in Bak, Oberlin and Seeger \cite{BakOberlinSeeger2009} if $\phi$ satisfies some additional mild assumptions.

With the uniform restriction bounds for convex simple curves in $n=2$, Sj\"olin deduced his result for general simple curves using the following lemma.

\begin{lem}\label{intro:sjolin-lemma}{\cite[Lemma~1]{Sjoelin1974}}
Let $\varphi$ be a smooth function on a compact interval $I$. Let $E=\{t\in I: \varphi(t)=0\}$ and let $\{I_k\}_{k=1}^\infty$ be the connected components of $I\backslash E$. Then
$$\sum_{k=1}^{\infty} \big(\sup_{I_k}|\varphi|\big)^\delta<\infty,\ \forall\delta>0.$$
\end{lem}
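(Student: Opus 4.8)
The plan is to prove the quantitative estimate $\sum_k\big(\sup_{I_k}|\varphi|\big)^{\delta}<\infty$ by a dyadic decomposition in the size of $\varphi$ on the components, the gain coming from the elementary principle that a $C^{L}$ function with many zeros on a short interval must be small there. First normalize so that $\|\varphi\|_{L^\infty(I)}\le 1$ (the case $\varphi\equiv 0$ being trivial), write $M_k=\sup_{I_k}|\varphi|\in(0,1]$, and for $j\ge 1$ set $A_j=\{k:\ 2^{-j}<M_k\le 2^{-j+1}\}$, so that $\sum_k M_k^{\delta}\le 2^{\delta}\sum_{j\ge 1}|A_j|\,2^{-j\delta}$. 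Fix an integer $L$ with $L\delta>1$. It then suffices to prove $|A_j|\lesssim_{L,\varphi}2^{j/L}$ for all large $j$: the remaining finitely many $j$ contribute a finite amount, because each $A_j$ is itself finite (each $I_k\in A_j$ has $|I_k|\gtrsim M_k\gtrsim 2^{-j}$ by the mean value theorem, and the $I_k$ are disjoint).

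The key input is a zero-counting bound. Let $B=\|\varphi^{(L)}\|_{L^\infty(I)}$. If a closed interval $J\subseteq I$ contains $L$ distinct zeros $t_0<\dots<t_{L-1}$ of $\varphi$, then for any $x\in J$ with $\varphi(x)\ne 0$ the auxiliary function $g(t)=\varphi(t)-\frac{\varphi(x)}{\omega(x)}\omega(t)$, where $\omega(t)=\prod_{i<L}(t-t_i)$, has $L+1$ zeros in $J$; applying Rolle's theorem $L$ times yields $\xi\in J$ with $g^{(L)}(\xi)=0$, and since $g^{(L)}=\varphi^{(L)}-\frac{\varphi(x)}{\omega(x)}L!$ (as $\omega$ is monic of degree $L$) this gives $\varphi(x)=\frac{\varphi^{(L)}(\xi)}{L!}\omega(x)$, whence $\sup_J|\varphi|\le\frac{B}{L!}|J|^{L}$. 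Equivalently: if $\sup_J|\varphi|>\frac{B}{L!}|J|^L$, then $\varphi$ has at most $L-1$ zeros in $J$, and so at most $L$ of the components $I_k$ are contained in $J$ (removing $\le L-1$ points from the interval $J$ leaves at most $L$ open subintervals, and any $I_k\subseteq J$ is one of them).

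To count $|A_j|$, choose the scale $\sigma_j=\big(\tfrac{L!}{\max(B,1)}2^{-j}\big)^{1/L}$, so that $\tfrac{B}{L!}\sigma_j^{L}\le 2^{-j}$, and partition $I$ into $\lesssim|I|/\sigma_j+1$ consecutive closed cells $Q$ of length $\le\sigma_j$. Every $I_k\in A_j$ either contains a partition endpoint in its interior — and since the $I_k$ are disjoint, at most one $I_k$ meets each of the $\lesssim|I|/\sigma_j$ endpoints — or else $I_k$ lies inside a single cell $Q$, in which case $\sup_Q|\varphi|\ge M_k>2^{-j}\ge\frac{B}{L!}|Q|^{L}$, so by the key input at most $L$ components lie in $Q$, for a total of $\lesssim L|I|/\sigma_j$ such $I_k$. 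Hence $|A_j|\lesssim_L|I|\,\sigma_j^{-1}\asymp_{L,\varphi}2^{j/L}$, and $\sum_j 2^{j/L}2^{-j\delta}<\infty$ because $1/L<\delta$; here smoothness of $\varphi$ enters only in that $L$ (hence the number of derivatives used) may be taken as large as $\delta$ requires.

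The one point requiring care is the calibration that turns the interpolation inequality into an honest count of components: $\sup_J|\varphi|\le\frac{B}{L!}|J|^L$ is merely a smallness statement, and it becomes useful only once $J$ is taken on the scale $\sigma_j$ matched to the dyadic level $2^{-j}$; after that, the accounting for components straddling cell boundaries, the mean value bound giving $|A_j|<\infty$, and the final geometric summation are all routine.
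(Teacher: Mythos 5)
Your proof is correct. The key interpolation bound --- if $\varphi$ has $L$ zeros $t_0<\dots<t_{L-1}$ in $J$, then for any $x\in J$ the auxiliary $g(t)=\varphi(t)-\tfrac{\varphi(x)}{\omega(x)}\omega(t)$ (with $\omega(t)=\prod_{i<L}(t-t_i)$ monic of degree $L$) has $L+1$ zeros, so iterated Rolle yields $\varphi(x)=\tfrac{\varphi^{(L)}(\xi)}{L!}\omega(x)$ and hence $\sup_J|\varphi|\le\tfrac{\|\varphi^{(L)}\|_\infty}{L!}|J|^L$ --- is the classical Lagrange-remainder estimate, and your cell decomposition at scale $\sigma_j\asymp 2^{-j/L}$ then cleanly bounds $|A_j|\lesssim 2^{j/L}$, which sums against $2^{-j\delta}$ once $L>1/\delta$. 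The bookkeeping (boundary-straddling components, the $L$-per-cell bound coming from $\le L-1$ zeros, the MVT argument for finiteness of each $A_j$) all checks out.

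The route, however, differs from the paper's. The paper does not prove Lemma~\ref{intro:sjolin-lemma} directly; it cites Sj\"olin and instead proves the stronger Lemma~\ref{intro:variation}, which controls the number of \emph{level-set} components $\{2^{-k-1}\le|\varphi|\le 2^{-k}\}$, not just zero-set components. Its mechanism is an iterated mean-value argument (Lemma~\ref{lem:first-variation}): many oscillations of $\varphi$ between levels $r/4$, $r$, $2r$ on short intervals force many oscillations of $\varphi'$ at a much larger level, and bootstrapping this through $\ell$ derivatives against a H\"older bound (Lemma~\ref{lem:first-variation-alpha}) gives $N(r;\varphi)\lesssim r^{-1/\alpha}$. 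Your argument is more elementary and goes to the $L$-th derivative in a single Rolle step rather than bootstrapping one derivative at a time, and it is arguably cleaner for the statement as posed. What the paper's version buys is exactly the level-set refinement that is needed downstream in Section~\ref{theorem} (where one decomposes according to the size of $\phi^{(n)}$, not its zero set); your interpolation argument counts components of the \emph{zero set}, and adapting it to level sets would require the additional work that the paper's formulation handles from the start.
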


In $n\ge 3$, we deduce Theorem \ref{intro:thm} from Lemma \ref{intro:lemma} using the following lemma.

\begin{lem}\label{intro:variation}
Let $\varphi$ be a smooth function on a compact interval $I$. For $k\in\mathbb Z$, let $E_k=\{t\in I: 2^{-k-1}\le |\varphi(t)|\le 2^{-k}\}$. Then there exist intervals $\{I_{k,j}\subset I\}_{j=1}^{N_k}$ such that
$$E_k\subset \bigcup_{j=1}^{N_k} I_{k,j}$$
and such that
$$2^{-k-2}\le |\varphi(t)|\le 2^{-k+1},\ t\in I_{k,j};$$
moreover, $N_k$ satisfies
$$N_k\le C_\delta 2^{\delta k},\ \forall \delta>0.$$
\end{lem}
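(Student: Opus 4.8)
The plan is to reduce the statement to a bound on the number of oscillations of $\varphi$ at scale $2^{-k}$, and to control that number using the fact that $\varphi$ is smooth (hence its derivatives are bounded on the compact interval $I$). First I would dispose of trivialities: if $2^{-k}$ exceeds $\sup_I|\varphi|$ the set $E_k$ is empty and we take $N_k=0$; likewise for $k$ very negative. So fix $k$ with $2^{-k}\le 2\sup_I|\varphi|$ and consider the (finitely many, by smoothness and compactness, unless $\varphi$ is constant — a case handled separately) connected components $J$ of the open set $\{t\in I: 2^{-k-2}<|\varphi(t)|<2^{-k+1}\}$. Each such $J$ is an interval on which the desired two-sided bound holds after passing to its closure; the intervals $I_{k,j}$ will be exactly these closures that meet $E_k$. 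Thus $N_k$ is at most the number of components of $\{2^{-k-2}<|\varphi|<2^{-k+1}\}$ that intersect $E_k$.

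The key step is to estimate this count. On each such component $J$, because it meets $E_k$ (where $|\varphi|\ge 2^{-k-1}$) but its endpoints lie on the level sets $|\varphi|=2^{-k-2}$ or $|\varphi|=2^{-k+1}$ (or on $\partial I$), the function $|\varphi|$ must traverse an interval of length at least $2^{-k-2}$ inside $J$; hence the total variation of $\varphi$ restricted to $\bigcup_j I_{k,j}$ is at least a constant times $N_k\,2^{-k}$. If we knew an \emph{a priori} bound $\mathrm{Var}_I(\varphi)\le M$ we would only get $N_k\lesssim 2^k$, which is far too weak. The improvement to $N_k\le C_\delta 2^{\delta k}$ for every $\delta>0$ must come from a \emph{variation bound for smooth functions} — precisely the ingredient the abstract advertises. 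Concretely, I would invoke (or prove) the statement that for a smooth $\varphi$ on compact $I$ and every $\delta>0$ there is $C_\delta$ with the property that for every $\lambda>0$ the number of disjoint subintervals on which $\varphi$ oscillates by at least $\lambda$ is $\le C_\delta \lambda^{-\delta}$; applying this with $\lambda \sim 2^{-k}$ gives $N_k\le C_\delta 2^{\delta k}$ as required.

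To make the variation bound itself plausible I would argue by induction on the order of vanishing, localizing near the finitely many zeros of $\varphi'$. Away from the zeros of $\varphi'$ the function is monotone on boundedly many pieces, so it contributes $O(1)$ oscillation intervals of any fixed height; near a zero $t_0$ of $\varphi'$ of order $m$ (so $\varphi(t)-\varphi(t_0)\sim c(t-t_0)^{m+1}$), an oscillation of height $\lambda$ forces $|t-t_0|\gtrsim \lambda^{1/(m+1)}$, and a scaling/packing count shows only $O(\lambda^{-1/(m+1)}\cdot\lambda^{1/(m+1)}) = O(1)$ such intervals per scale — iterating over the finitely many critical points and the finitely many dyadic sub-scales needed to cover a neighborhood of $t_0$ yields a bound that is $O(\log(1/\lambda))$, hence $O_\delta(\lambda^{-\delta})$, and in fact one can push this to $O(1)$ with more care. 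The main obstacle is precisely this variation estimate: one has to handle zeros of $\varphi$ (equivalently of derivatives) of \emph{infinite} order, where no polynomial lower bound like $|t-t_0|\gtrsim\lambda^{1/(m+1)}$ is available; there the smoothness is used through the flatness of $\varphi$ (all derivatives vanish), which forces $\varphi$ to be extremely small near $t_0$ and so the sets $E_k$ for the relevant $k$ simply do not reach that neighborhood — a Borel–Lebesgue covering argument splitting $I$ into a neighborhood of the infinite-order zeros (where $|\varphi|<2^{-k-1}$, contributing nothing to $E_k$) and its complement (finitely many finite-order zeros, handled as above) closes the gap. This is exactly the mechanism already present in Sj\"olin's Lemma \ref{intro:sjolin-lemma}, of which the present lemma is a dyadic, quantitative refinement, so I expect the write-up to follow that proof closely.
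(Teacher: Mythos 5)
Your proposal takes a genuinely different route from the paper, and it has a gap in the crucial step.

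The paper does not localize near the zeros of $\varphi'$ or induct on the order of vanishing. Instead it makes the count $N(r;\varphi)$ itself do the work through a \emph{scale amplification} argument (Lemma~\ref{lem:first-variation}): if there are $\mathcal N$ disjoint oscillation intervals of $\varphi$ at height $r$ inside $[0,1]$, then by pigeonholing at least half of them have length $\le 2/\mathcal N$, and the mean value theorem then produces, interleaved between them, $\gtrsim\mathcal N$ points where $\varphi'$ is $\le -\mathcal N r/8$ alternating with points where $\varphi'\ge 0$; this forces $N(\mathcal N r/8;\varphi')\gtrsim \mathcal N$. Iterating through $\ell$ derivatives gives $N(c_\ell\,\mathcal N^\ell r;\varphi^{(\ell)})\gtrsim \mathcal N$, and since $\varphi^{(\ell)}$ is H\"older and bounded, Lemma~\ref{lem:first-variation-alpha} caps this, yielding $\mathcal N\lesssim r^{-1/\ell}$ for every $\ell$. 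No structural information about the zero set of $\varphi$ or $\varphi'$ is ever used, and no case analysis by order of vanishing is needed.

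Your route breaks down precisely where you flag the ``main obstacle.'' First, a smooth $\varphi$ may have \emph{infinitely many} zeros of $\varphi'$ (the functions in Proposition~\ref{intro:prop} are exactly of this kind), so ``finitely many zeros of $\varphi'$'' is not available, and the zeros can accumulate at an infinite-order zero of $\varphi$. Second, and more seriously, your treatment of infinite-order zeros is incorrect in a way that hides the true content of the lemma. You claim that near such a point $t_0$ the set $E_k$ ``simply does not reach that neighborhood,'' but for any fixed neighborhood $U$ of $t_0$ on which $\varphi\not\equiv 0$, the set $E_k\cap U$ is nonempty for all sufficiently large $k$; the neighborhood that $E_k$ avoids must shrink as $k$ grows, and the whole problem is to quantify how many oscillation intervals appear in the shrinking annulus $I\setminus U_k$. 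For $\phi(t)=e^{-t^{-\alpha}}\sin(t^{-\beta})$ one can compute that $N_k$ grows like a positive power of $k$, so your later assertion that ``one can push this to $O(1)$ with more care'' is also false. Without a mechanism to bound the accumulation of oscillations in the shrinking neighborhoods of the flat points — which is exactly what the paper's derivative-amplification lemma supplies — your Borel--Lebesgue covering argument does not close the gap.
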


Note that Lemma \ref{intro:sjolin-lemma} follows from Lemma \ref{intro:variation}, but not vice versa. More general versions of Lemma \ref{intro:variation} and Theorem \ref{intro:thm} under weaker assumptions are stated in Section \ref{variation} and Section \ref{theorem} respectively.

The proof of Lemma \ref{intro:lemma} follows Drury's argument in \cite{Drury1985} using offspring curves. We are able to obtain uniform bounds here because the special form of the simple curves allows several technical steps in the iteration process to go through uniformly. The proof of Theorem \ref{intro:thm} proceeds by decomposing the curve into segments according to the size of $\phi^{(n)}(t)$ and then summing up the pieces directly using Lemma \ref{intro:lemma} and rescaling. Such an approach has also been applied to polynomial curves in Stovall \cite{Stovall2016}, where it is coupled with a square function estimate so that the pieces are summed up in a more efficient way.

The paper is organized as follows. In Section \ref{lemma} we give a detailed proof of Lemma \ref{intro:lemma}. In Section \ref{variation} we prove a more general version of Lemma \ref{intro:variation} under weaker smoothness assumptions. In Section \ref{theorem} we prove a more general version of Theorem \ref{intro:thm}  based on Sections \ref{lemma} and \ref{variation}. In Section \ref{prop} we prove Proposition \ref{intro:prop} by examining Knapp type examples. A technical calculation needed in Section \ref{prop} is postponed to Section \ref{appendix}. Throughout the paper, $C$ denotes a constant whose value may change from line to line.

\textit{Acknowledgment.}
We would like to thank Andreas Seeger for bringing this subject to our attention and for many useful suggestions.

\section{Proof of Lemma \ref{intro:lemma}}\label{lemma}

In this section, we give a detailed proof of Lemma \ref{intro:lemma} following Drury's argument in \cite{Drury1985}. We will assume that $I=[a,b]$ is a compact interval and $\phi\in C^{n}(I)$ satisfies \eqref{eq:nondegenerate}.

Associated with the given curve 
$$\gamma(t)=\big(t,t^2,\cdots,t^{n-1},\phi(t)\big),\ t\in I,$$
define a family of curves
\begin{equation*}
\Upsilon=\left \{\gamma_{\alpha}:\gamma_{\alpha}(t)=\frac{1}{N}\sum_{k=1}^{N}\gamma(t+\alpha_k)  \right \}
\end{equation*}
where $N\in\mathbb N$ and $\alpha=(\alpha_1,\cdots,\alpha_N)\in\mathbb R^N$ satisfies $0\leq\alpha_1\leq\cdots\leq\alpha_N$. The domain of $\gamma_{\alpha}$ is $I_{\alpha}=[a-\alpha_1,b-\alpha_N]$, so that each $\gamma(t+\alpha_k)$ is well defined. Note that $\gamma\in \Upsilon$. A curve $\gamma_{\alpha}\in\Upsilon$ takes the form
\begin{equation*}
\gamma_{\alpha}(t)=\left (t+\frac{1}{N}\sum_{k=1}^{N}\alpha_k,\cdots,\frac{1}{N}\sum_{k=1}^{N}(t+\alpha_k)^{n-1},
\Phi_{\alpha}(t)\right )
\end{equation*}
where
\begin{equation*}
\Phi_{\alpha}(t)=\frac{1}{N}\sum_{k=1}^{N}\phi(t+\alpha_k)
\end{equation*}
satisfies
\begin{equation}\label{eq:nondegenerate-Phi}
1/2\le |\Phi_\alpha^{(n)}(t)|\le 1,\ t\in I_\alpha.
\end{equation}
To prove Lemma \ref{intro:lemma}, we will show that
\begin{equation}\label{unweighted}
\left (\int_{I_\alpha}\left |\hat{f}(\gamma_\alpha(t))
\right |^{q}dt\right )^{{1}/{q}}
\leq C_{p,n}\|f\|_{L^{p}(\mathbb{R}^n)}
\end{equation}
holds uniformly for all $\gamma_\alpha\in \Upsilon$, where $p, q$ satisfy \eqref{eq:endline} and $C_{p,n}$ depends only on $p$ and $n$. For simplicity, we will denote $\gamma=\gamma_\alpha$, $I=I_\alpha$. 

By duality, it suffices to show that the operator 
\begin{equation*}
\mathcal E(g)(x):=\int_I e^{i \gamma(t)\cdot x}g(t)dt
\end{equation*}
satisfies
\begin{equation} \label{eq3}
\|\mathcal E(g)\|_{L^{p'}(\mathbb R^n)}\leq C_{p,n}\|g\|_{L^{q'}(I)}
\end{equation}
for
\begin{equation} \label{eq3-range}
\frac{n^2+n}{2}< p' \le\infty,\quad \frac{n^2+n}{2}\frac{1}{p'}+\frac{1}{q'}=1.
\end{equation}
The proof is by induction on $p'$. The induction hypothesis is that for some $p'_0$ and $q'_0$ with
$$\frac{n^2+n}{2}\frac{1}{p'_0}+\frac{1}{q'_0}=1,$$
we have
\begin{equation} \label{eq4}
\|\mathcal E(g)\|_{L^{p'_0}(\mathbb R^n)}\leq C_{p_0,n}\|g\|_{L^{q'_0}(I)},
\end{equation}
that is,
\begin{equation}\label{eq31}
\left (\int_{\mathbb{R}^n}\left| \int_{I_{\alpha}}e^{i\gamma_{\alpha}(t)\cdot x}g(t)dt\right |^{p'_0} dx\right )^{{1}/{p'_0}}\leq
C_{p_0,n}\left (\int_{I_{\alpha}}|g(t)|^{q'_0}dt
\right )^{{1}/{q'_0}}
\end{equation}
holds uniformly for $\gamma_{\alpha}\in\Upsilon$. The base case is that (\ref{eq4}) holds for $p'_0=\infty$ and $q'_0=1$, with $C=1$.

By Fubini's theorem, we can write
\begin{align*}
\big ( \mathcal E(g) (x)
\big )^n
&=\left (\int_I e^{i\gamma(t)\cdot x}g(t) dt\right )^n \\
&=\int_{I^n}e^{i\frac{\sum_{k=1}^{n}\gamma(t_k)}{n}\cdot nx}
\prod_{k=1}^{n}g(t_k)dt_1\cdots dt_n.
\end{align*}
Since the last integral is symmetric in $t_1,\cdots,t_n$, we have
\begin{equation}\label{eq:symmetry}
\big ( \mathcal E(g) (x)\big )^n
=n!\int_{A}e^{i\frac{\sum_{k=1}^{n}\gamma(t_k)}{n}\cdot nx}
\prod_{k=1}^{n}g(t_k)dt_1\cdots dt_n
\end{equation}
where
\begin{equation*}
A =\{(t_1,\cdots,t_n)\in I^n: t_1 < t_2<\cdots < t_n\}.
\end{equation*}

Apply the change of variables
\begin{align}\label{eq:t-to-th}
&t=t_1,\quad h_k=t_k-t_1,\ k=2,\cdots, n.
\end{align}
From \eqref{eq:symmetry} we can write
\begin{equation}\label{eq:symmetry-h}
\big ( \mathcal E(g) (x)\big )^n
=n!\int_{B}e^{i\frac{\sum_{k=1}^{n}\gamma(t+h_k)}{n}\cdot nx}
\prod_{k=1}^{n}g(t+h_k) dtdh_2\cdots dh_n
\end{equation}
where $h_1\equiv 0$ and $B$ is the image of $A$ under the change of variables \eqref{eq:t-to-th}. Note that the curves
$$t\mapsto\frac{1}{n} \sum_{k=1}^{n}\gamma(t+h_k),$$
belong to the family $\Upsilon$.

Write
$$v(h)=h_2\cdots h_n \prod_{2\le i<j\le n}(h_j-h_i)$$
and define
\begin{equation}\label{eq:T}
T(F)(x)
=\int_{B} 
e^{i\frac{\sum_{k=1}^{n}\gamma(t+h_k)}{n}\cdot x} F(t,h)v(h) dtdh.
\end{equation}
By Minkowski's inequality and the induction hypothesis \eqref{eq31},
\begin{align}\label{eq:minkowski}
\|T(F)\|_{L^{p'_0}(dx)}
&\le \int \left \| \int
e^{i\frac{\sum_{k=1}^{n}\gamma(t+h_k)}{n}\cdot x} F(t,h) dt\right \|_{L^{p'_0}(dx)}v(h)dh\\
&\le C_{p_0,n} \int \left \| F(\cdot,h) \right \|_{L^{q'_0}(I_h,dt)}v(h)dh.\notag\\
&\le C_{p_0,n} \left \| F \right \|_{L^1(vdh,L^{q'_0}(dt))}.\notag
\end{align}

On the other hand, consider the change of variables	\begin{equation}\label{eq:change-of-variables}
y=\frac{\sum_{k=1}^{n}\gamma(t+h_k)}{n}.
\end{equation}
The corresponding Jacobian is
\begin{equation*}
J(t,h)={\frac{1}{n^n}}\left |\det[\gamma'(t),\gamma'(t+h_2) \cdots, \gamma'(t+h_n)]\right |.
\end{equation*}
By a generalized Rolle's theorem (see Exercises 95 and 96 in Part V, Chapter 1 of \cite{PolyaSzego1998}), we have
\begin{equation*}
\det[\gamma'(t),\gamma'(t+h_2) \cdots, \gamma'(t+h_n)]
=\frac{\Phi_\gamma^{(n)}(\xi)}{n!} h_2\cdots h_n \prod_{2\le i<j\le n}(h_j-h_i)
\end{equation*}
for some $\xi\in(t+h_{n-1},t+h_n)$. Therefore, by \eqref{eq:nondegenerate-Phi}, 
\begin{equation}\label{eq:jacobian}
J(t,h)\ge C_n v(h).
\end{equation}
In particular, $J$ is nonvanishing on $B$. It follows that (see \cite[Prop.~3.9]{DendrinosStovall2015}) the change of variables \eqref{eq:change-of-variables} is injective. So we can write \eqref{eq:T} as
\begin{align*}
T(F)(x)
&=n!\int_{\widetilde{A }} e^{iy\cdot x} F(t,h) \frac{v(h)}{J(t,h)}dy
\end{align*}
where $\widetilde{A }$ is the image of $B$ under the change of variables \eqref{eq:change-of-variables}. By the Plancherel theorem, we have
$$\|T(F)\|_{L^{2}(dx)}
=C_n \left (\int_{\widetilde{A }} \left |F(t,h)\right |^2
\left [ \frac{v(h)}{J(t,h)}\right ]^2dy\right )^{1/2}.$$
Changing the variables back and using \eqref{eq:jacobian}, we get
\begin{align}\label{eq:plancherel}
\|T(F)\|_{L^{2}(dx)}
&= C_n \left (\int_{B} \left |F(t,h)\right |^2
\left [\frac{v(h)}{J(t,h)}\right ]v(h)dtdh\right )^{1/2}\\
&\le C_n \left (\int_{B} \left |F(t,h)\right |^2 v(h)dtdh\right )^{1/2}\notag\\
&\le C_n \left \| F \right \|_{L^2(vdh,L^{2}(dt))}.\notag
\end{align}

By an interpolation argument (see \cite{BenedekPanzone1961}), from \eqref{eq:minkowski} and \eqref{eq:plancherel} we obtain
\begin{equation}\label{eq:interpolate}
\|T(F)\|_{L^{u}(dx)}
\le C_{p_0,n} \left \| F \right \|_{L^r(vdh,L^{s}(dt))}
\end{equation}
for any
\begin{equation}  \label{eq19}
\left (\frac{1}{r},\frac{1}{s},\frac{1}{u}\right )=(1-\theta)\left (1,\frac{1}{q'_0},\frac{1}{p'_0}\right )+\theta\left (\frac{1}{2},\frac{1}{2},\frac{1}{2}\right )
\end{equation}
with $\theta\in (0,1)$, and for a constant $C_{p_0,n}$ depending only on $p_0$ and $n$. In particular, taking $$F(t,h)=\frac{1}{v(h)}\prod_{k=1}^{n}g(t+h_k)$$ 
in \eqref{eq:interpolate}, we obtain, by \eqref{eq:symmetry-h},
\begin{align}\label{eq:Lnu}
\|\mathcal E(g)\|_{L^{nu}(dx)}^n
\le C_{p_0,n}  \left (\int G(h)^{r/s} v^{1-r}(h)dh\right )^{1/r}.
\end{align}
where
$$G(h)=\int \Big | \prod_{k=1}^{n}g(t+h_k)\Big |^s dt.$$

By \cite[Lemma~1]{DruryMarshall1985}, we have
$$v^{-\frac{2}{n}}\in L^{1,\infty}(dh).$$
Therefore, with
\begin{equation}\label{r<1+2/n}
1<r<1+\frac{2}{n}
\end{equation}
and
$$\rho=\frac{2}{n(r-1)}>1,$$
we can bound
\begin{align}\label{eq:Gv}
\int G(h)^{r/s} v^{1-r}(h)dh
&\le C_\rho\|G^{r/s}\|_{L^{\rho',1}(dh)}\|v^{1-r}\|_{L^{\rho,\infty}(dh)}\\
&=C_\rho\|G^{r/s}\|_{L^{\rho',1}(dh)}\|v^{-\frac{2}{n}}\|^{1/\rho}_{L^{1,\infty}(dh)}\notag \\
&\le C_{r,n}\|G^{r/s}\|_{L^{\rho',1}(dh)}.\notag
\end{align}

If for some $E\subset I$,
\begin{equation}\label{eq:restricted-g}
|g(t)|\le 1_E(t),\ t\in I.
\end{equation}
Then
$$G(h)\le |E|,\quad \int G(h) dh\le |E|^n.$$
It follows that
\begin{equation}\label{eq:holder}
\|G^{r/s}\|_{L^{\rho',1}(dh)}
\le C_{r,n} |E|^{\frac{r}{s}+\frac{n-1}{\rho'}}
\end{equation}
provided
\begin{equation}\label{eq:rsrho}
\frac{r}{s}\rho'>1.
\end{equation}

Combining \eqref{eq:Lnu}, \eqref{eq:Gv} and \eqref{eq:holder}, we obtain
$$\|\mathcal E(g)\|_{L^{nu}(dx)}
\le  C_{p_0,r,n} |E|^{\frac{1}{ns}+\frac{n-1}{nr\rho'}}$$
as long as \eqref{eq19}, \eqref{r<1+2/n}, \eqref{eq:rsrho} and \eqref{eq:restricted-g} are satisfied. By interpolation, this produces a new bound \eqref{eq3} for any $p',q'$ satisfying \eqref{eq3-range} with
$$p'>p'_1,\quad q'<q'_1,$$
where
$$\left (\frac{1}{p'_1},\frac{1}{q'_1}\right )
=\frac{n-2}{n(n+2)}\left (\frac{1}{p'_0},\frac{1}{q'_0}\right )
+\left (\frac{2}{n(n+2)},\frac{2}{n(n+2)}\right ).$$

Iterating this process, we see that \eqref{eq3} holds for $p',q'$ in the full range \eqref{eq3-range}. This completes the proof of Lemma \ref{intro:lemma}.

\section{Proof of Lemma \ref{intro:variation}}\label{variation}
We now prove Lemma \ref{intro:variation}. Let $\varphi$ be a continuous (real-valued) function defined on a compact interval $[a,b]$. Let $r>0$. Suppose the set
$$E_r=\{t\in [a,b]: r/2 \le |\varphi(t)|\le r\}\neq\emptyset.$$
For $t\in E_r$, define
$$a_t=\sup\,\{s\in [a,t]: |\varphi(s)|\le r/4 \text{ or } |\varphi(s)|\ge 2r\}$$
if the set on the right-hand side is not empty; otherwise define $a_t=a$. Similarly, define
$$b_t=\inf\,\{s\in [t,b]: |\varphi(s)|\le r/4 \text{ or } |\varphi(s)|\ge 2r\}$$
if the set on the right-hand side is not empty; otherwise define $b_t=b$. Let
\begin{align*}
I_t=(a_t,b_t)\quad \text{if } a_t>a \text{ and } b_t<b,\\
I_t=[a_t,b_t)\quad \text{if } a_t=a \text{ and } b_t<b,\\
I_t=(a_t,b_t]\quad \text{if } a_t>a \text{ and } b_t=b,\\
I_t=[a_t,b_t]\quad \text{if } a_t=a \text{ and } b_t=b.
\end{align*}
Note that $\varphi$ has a definite sign on $I_t$.

\begin{lem}\label{lem:disjoint}
For any $t_1, t_2\in E_r$, we have either $I_{t_1}=I_{t_2}$ or $I_{t_1}\cap I_{t_2}=\emptyset$.
\end{lem}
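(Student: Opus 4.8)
The plan is to peel off the two structural properties that the definition of $I_t$ is designed to guarantee and then run a short comparison argument; I will freely assume $a<b$, the case $a=b$ being trivial. First I would record three easy facts. \textbf{(Size)} For every $t\in E_r$ and every $s$ in the open interval $(a_t,b_t)$ one has $r/4<|\varphi(s)|<2r$: by the definition of $a_t$ as a supremum, no point of $(a_t,t]$ satisfies $|\varphi|\le r/4$ or $|\varphi|\ge 2r$, and symmetrically on $[t,b_t)$. \textbf{(Boundary)} The set $S_t^{+}=\{s\in[t,b]:|\varphi(s)|\le r/4\ \text{or}\ |\varphi(s)|\ge 2r\}$ is closed by continuity of $\varphi$, so if $b_t<b$ then $b_t=\inf S_t^{+}\in S_t^{+}$, i.e.\ $|\varphi(b_t)|\notin(r/4,2r)$; symmetrically $|\varphi(a_t)|\notin(r/4,2r)$ when $a_t>a$. \textbf{(Containment)} $a_t\le t\le b_t$, and $t\in I_t$, because $|\varphi(t)|\in[r/2,r]$ keeps $t$ off $S_t^{\pm}$ and hence strictly inside $I_t$ unless $t$ is a global endpoint, in which case $I_t$ is closed there. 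Finally, observe that the four cases defining $I_t$ depend only on whether $a_t=a$ and whether $b_t=b$, so $I_{t_1}=I_{t_2}$ will follow once we know $a_{t_1}=a_{t_2}$ and $b_{t_1}=b_{t_2}$.

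Now suppose $I_{t_1}\cap I_{t_2}\neq\emptyset$, fix $s$ in the intersection, and assume $t_1\le t_2$ (the reverse case being symmetric). \textbf{Step 1: $t_2\in I_{t_1}$.} If not, then since $t_2\ge t_1$ the obstruction cannot be at the left end of $I_{t_1}$, so we must have $b_{t_1}<b$ and $b_{t_1}\le t_2$. But $s\le b_{t_1}\le t_2$ and $s,t_2$ both lie in the interval $I_{t_2}$, hence $b_{t_1}\in I_{t_2}$; since $b_{t_1}\notin\{a,b\}$ this forces $b_{t_1}\in(a_{t_2},b_{t_2})$, so $|\varphi(b_{t_1})|\in(r/4,2r)$ by (Size), contradicting (Boundary). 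The mirror argument (working at the left end and using $a_{t_2}$) gives $t_1\in I_{t_2}$. \textbf{Step 2: equality.} From $t_2\in I_{t_1}$ we get $t_2\le b_{t_1}$, and since $t_1\le t_2$ we have $S_{t_2}^{+}=S_{t_1}^{+}\cap[t_2,b]$; but every element of $S_{t_1}^{+}$ is $\ge\inf S_{t_1}^{+}=b_{t_1}\ge t_2$, so $S_{t_1}^{+}=S_{t_2}^{+}$ and hence $b_{t_1}=b_{t_2}$ (the convention $b_t=b$ when $S_t^{+}=\emptyset$ being consistent with this). The symmetric computation with $S^{-}$ and the inequality $a_{t_2}\le t_1$ (from $t_1\in I_{t_2}$) gives $a_{t_1}=a_{t_2}$, and therefore $I_{t_1}=I_{t_2}$.

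The only point requiring care is the endpoint bookkeeping: in Step~1 one must verify that the point $b_{t_1}$ (resp.\ $a_{t_2}$ in the mirror argument) is genuinely interior — i.e.\ distinct from both $a$ and $b$, hence inside the \emph{open} interval $(a_{t_2},b_{t_2})$ (resp.\ $(a_{t_1},b_{t_1})$) — so that (Size) applies without caveat; that $b_{t_1}\neq b$ holds because we are in the case $b_{t_1}<b$, and $b_{t_1}\neq a$ follows from $|\varphi(t_1)|\in[r/2,r]$. One also needs to track the ``otherwise'' conventions for $a_t,b_t$ when the obstacle sets $S_t^{\pm}$ are empty, both in (Containment) and in the set-nesting step. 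None of this is difficult, but it is where a hasty argument would slip.
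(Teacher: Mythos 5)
Your proof is correct, and it follows the same logical skeleton as the paper's one‑line argument (``if $t_2\in I_{t_1}$ then $I_{t_1}=I_{t_2}$; if $t_2\notin I_{t_1}$ then $I_{t_1}\cap I_{t_2}=\emptyset$''), merely supplying in full the routine but fiddly verifications --- the (Size)/(Boundary)/(Containment) facts and the endpoint bookkeeping --- that the paper leaves implicit. No gap; the extra care about whether $a_t,b_t$ coincide with $a,b$ and about the ``otherwise'' conventions is exactly what a complete write‑up needs.
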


\begin{proof}
If $t_2\in I_{t_1}$, then by the definitions of $I_{t_1}$ and $I_{t_2}$, we must have $I_{t_1}=I_{t_2}$. If $t_2\notin I_{t_1}$, then $I_{t_1}\cap I_{t_2}=\emptyset$.
\end{proof}

Notice that $\{I_t\}_{t\in E_r}$ forms an open cover of the compact set $E_r$ in $[a,b]$. So there must be a finite subcover. Combining with Lemma \ref{lem:disjoint}, we get the following.

\begin{lem}\label{lem:finite}
$\{I_t\}_{t\in E_r}$ is a finite set of disjoint intervals.
\end{lem}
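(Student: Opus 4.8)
The plan is to combine a compactness argument with Lemma \ref{lem:disjoint}, exactly as the surrounding discussion suggests. First I would record two preliminary observations. Since $\varphi$ is continuous on $[a,b]$, the set $E_r = |\varphi|^{-1}([r/2,r])$ is closed in $[a,b]$ and therefore compact. Moreover, for every $t\in E_r$ one has $r/4 < |\varphi(t)| < 2r$, so $t$ belongs to neither of the sets whose supremum/infimum define $a_t$ and $b_t$; hence $a_t < t < b_t$ and $t\in(a_t,b_t)\subseteq I_t$. In particular $\{I_t\}_{t\in E_r}$ covers $E_r$.

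The second point is that each $I_t$ is open in the subspace topology of $[a,b]$: the three "half-open or closed at an endpoint" cases in the definition, such as $[a,b_t)$, are exactly the relatively open intervals of $[a,b]$ that meet $\{a,b\}$. Thus $\{I_t\}_{t\in E_r}$ is an open cover of the compact set $E_r$, and we may extract a finite subcover $I_{t_1},\dots,I_{t_m}$.

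Finally I would upgrade this finite subcover to the assertion that the entire family $\{I_t\}_{t\in E_r}$ is finite. Given an arbitrary $t\in E_r$, choose $i$ with $t\in I_{t_i}$; then $t\in I_t\cap I_{t_i}$, so Lemma \ref{lem:disjoint} forces $I_t = I_{t_i}$. Therefore $\{I_t: t\in E_r\} = \{I_{t_1},\dots,I_{t_m}\}$ is a finite set, and its members are pairwise disjoint, again by Lemma \ref{lem:disjoint}.

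I do not anticipate a real obstacle here. The only points needing a moment's care are the bookkeeping at the endpoints $a$ and $b$ — checking that the intervals touching $a$ or $b$ are genuinely open in $[a,b]$, so that compactness of $E_r$ can be invoked — and the elementary verification that $t\in I_t$, which is precisely what makes $\{I_t\}_{t\in E_r}$ a cover in the first place.
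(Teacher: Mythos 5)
Your proposal is correct and follows essentially the same route as the paper: compactness of $E_r$, observing that each $I_t$ is relatively open in $[a,b]$ and contains $t$ so that $\{I_t\}_{t\in E_r}$ is an open cover, extracting a finite subcover, and using Lemma~\ref{lem:disjoint} to conclude both that the whole family reduces to that finite subcover and that its members are pairwise disjoint. The only cosmetic slip is the claim $a_t<t<b_t$, which can fail when $t=a$ or $t=b$ (e.g.\ $t=a$ forces $a_t=a=t$), but in those cases the bracket convention still guarantees $t\in I_t$, so the cover assertion and the rest of the argument stand.
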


With Lemma \ref{lem:finite}, we can make the following.

\begin{defi}
Let $\varphi$ be a continuous function defined on a compact interval $[a,b]$. For $r>0$, define
$$N(r;\varphi)=\#\{I_t\}_{t\in E_r}$$
where $\{I_t\}_{t\in E_r}$ is as described above.
\end{defi}

We now prove the main technical lemma. Denote by $C^1[a,b]$ the space of functions on $[a,b]$ whose derivative is continuous on $[a,b]$.

\begin{lem}\label{lem:first-variation}
Suppose $\varphi\in C^1[0,1]$ and $N(r;\varphi)\ge \mathcal N\ge 20$. Then
$$N\left (\frac{\mathcal N}{8}r;\varphi'\right )\ge \frac{\mathcal N}{16}.$$
\end{lem}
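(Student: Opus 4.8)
The plan is to show that if $\varphi$ attains the size $\sim r$ on many separated intervals, then its derivative $\varphi'$ must itself attain a correspondingly large size $\sim \mathcal N r$ on a comparable number of separated intervals. The mechanism is the mean value theorem applied on each interval $I_t$ together with a pigeonholing argument: on most of the $I_t$'s, the interval cannot be too long, so the variation of $\varphi$ across it (which is $\gtrsim r$) forces $|\varphi'|$ to be large somewhere inside.

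First I would count lengths. Let $\{I_1,\dots,I_M\}$ with $M = N(r;\varphi) \ge \mathcal N$ be the disjoint intervals produced by the construction, each containing a point $t_j \in E_r$. Since they are pairwise disjoint subintervals of $[0,1]$, we have $\sum_{j=1}^M |I_j| \le 1$, so at most $\mathcal N/2$ of them can have length $> 2/\mathcal N$; hence at least $M - \mathcal N/2 \ge \mathcal N/2$ of them, say after relabeling $I_1,\dots,I_{M'}$ with $M' \ge \mathcal N/2$, satisfy $|I_j| \le 2/\mathcal N$. Next I would extract, on each such short $I_j$, a point where $|\varphi'|$ is large. On $I_j$ the function $\varphi$ has a definite sign, equals $\pm r$ at $t_j$, and by construction reaches either $\pm r/4$ or $\pm 2r$ at an endpoint (or we are at an endpoint of $[0,1]$, a case to be handled separately — but note $t_j\in E_r$ forces $|\varphi(t_j)|\ge r/2$, and if the whole $I_j$ equals $[0,1]$ then $M=1<\mathcal N$, contradiction, so at least one endpoint of $I_j$ is a genuine crossing point). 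Thus there are points $s_j, t_j \in \overline{I_j}$ with $|\varphi(s_j) - \varphi(t_j)| \ge r/4$; by the mean value theorem there is $\xi_j \in I_j$ with
$$|\varphi'(\xi_j)| \ge \frac{|\varphi(s_j)-\varphi(t_j)|}{|I_j|} \ge \frac{r/4}{2/\mathcal N} = \frac{\mathcal N}{8} r.$$
So $\varphi'$ attains size $\ge \frac{\mathcal N}{8} r$ at a point of each of the $M' \ge \mathcal N/2$ short intervals $I_j$.

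Now I would translate "$\varphi'$ is large at $\xi_j \in I_j$" into a lower bound for $N(\frac{\mathcal N}{8} r; \varphi')$. Set $\rho = \frac{\mathcal N}{8} r$. Each $\xi_j$ lies in $E_{\rho'}$ for some dyadic-type level, but more directly: pick the level $\rho_j$ with $\rho_j/2 \le |\varphi'(\xi_j)| \le \rho_j$; since $|\varphi'(\xi_j)| \ge \rho$ we may as well work at a single common scale by noting $\xi_j \in E_{r'}$ with $r' = |\varphi'(\xi_j)|$, so $\xi_j$ contributes an interval $I_{\xi_j}$ in the construction for $\varphi'$ at level $r'$. The cleanest route is: the intervals $I_j$ are disjoint, so the points $\xi_j$ are distinct and separated into distinct $I_j$'s; by Lemma \ref{lem:disjoint}-type reasoning applied to $\varphi'$, the intervals $I_{\xi_j}$ (at their respective levels) are either equal or disjoint, and two $\xi_j$'s lying in disjoint $I_j \subset I$ with $|\varphi|$ of definite sign cannot... — here is where care is needed. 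The honest count is that the $\xi_j$ live in $M' \ge \mathcal N/2$ disjoint intervals $I_j$, and the construction intervals for $\varphi'$ containing them are nested inside regions where $\varphi'$ stays comparable; a single such interval for $\varphi'$ could in principle swallow several $\xi_j$'s only if $|\varphi'|$ stays $\sim \rho$ across them, but then $\varphi$ would be monotone with large derivative over a long stretch, which is compatible with $\varphi$ hitting size $\sim r$ only $O(1)$ times there — contradicting that it hit it on many $I_j$'s. Making this precise gives that at least $\frac{\mathcal N}{16}$ of the $I_{\xi_j}$ are distinct, i.e. $N(\frac{\mathcal N}{8} r; \varphi') \ge \frac{\mathcal N}{16}$.

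The main obstacle I anticipate is exactly the last step: converting the $M' \gtrsim \mathcal N$ large values of $\varphi'$ at points $\xi_j$ in disjoint intervals into $\gtrsim \mathcal N$ \emph{distinct} construction-intervals $N(\rho;\varphi')$ — because the construction for $\varphi'$ is at a fixed scale $\rho = \frac{\mathcal N}{8}r$ while the values $|\varphi'(\xi_j)|$ may be much larger, so a priori several $\xi_j$ could fall into one $I_{\xi_j}$. The fix is to observe that if $I_{\xi_{j_1}} = I_{\xi_{j_2}}$ then $\varphi'$ has a definite sign and magnitude $\ge \rho/4$ throughout an interval containing both $I_{j_1}, I_{j_2}$, hence $\varphi$ is strictly monotone there with total variation exceeding the length times $\rho/4$; combined with the separation of $I_{j_1}, I_{j_2}$ (each of length $\le 2/\mathcal N$ but with a gap between them on which $|\varphi|\le r/4$ or $\ge 2r$), one derives that $\varphi$ cannot return to the band $[r/2, r]$ too often, so each $I_{\xi}$ absorbs at most a bounded number (in fact at most $2$, which is why $\mathcal N/2$ becomes $\mathcal N/16$ with room to spare) of the $I_j$'s. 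I would write this quantitatively using $\int_{I_{\xi}} |\varphi'| \le \sup_{I_\xi}|\varphi| - \inf_{I_\xi}|\varphi| + (\text{crossings}) \le 2r$ against $\int_{I_\xi}|\varphi'| \ge (\rho/4)\,|I_\xi| \ge (\rho/4)\cdot(\text{number of }I_j\text{ inside})\cdot(\text{min gap})$, and the hypothesis $\mathcal N \ge 20$ ensures all the numerical slack needed.
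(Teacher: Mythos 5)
Your opening moves match the paper's: using disjointness in $[0,1]$ to extract $\ge\mathcal N/2$ intervals of length $\le 2/\mathcal N$, and applying the mean value theorem to produce a point $\xi_j$ in each with $|\varphi'(\xi_j)|\ge\frac{\mathcal N}{8}r=:\rho$. But the crucial second half, passing from those points to a lower bound on $N(\rho;\varphi')$, is where your proposal has a genuine gap. The quantity $N(\rho;\varphi')$ only counts intervals associated with the \emph{fixed} band $E_\rho=\{\rho/2\le|\varphi'|\le\rho\}$; the MVT only gives $|\varphi'(\xi_j)|\ge\rho$, so $\xi_j$ may well have $|\varphi'(\xi_j)|>2\rho$ and then lies in \emph{no} construction interval at level $\rho$ at all. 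Framing the obstruction as ``several $\xi_j$ could fall into one $I_{\xi_j}$'' therefore misses the primary difficulty: one must first exhibit points where $|\varphi'|$ actually sits in $[\rho/2,\rho]$, which requires showing $\varphi'$ returns from size $\ge\rho$ down through the band between the $\xi_j$'s. Your ``fix'' (if two $\xi_j$'s share an interval then $\varphi$ is monotone and cannot revisit the band often) gestures at the right mechanism but is left entirely at the level of a sketch, with the key quantitative step explicitly deferred (``Making this precise gives...''), so as written it does not constitute a proof.

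The paper sidesteps all of this with a small but decisive refinement of the selection step. Rather than taking all short $I_j$'s, one first notes each has an endpoint where $\varphi$ equals one of the four values $\pm r/4,\pm 2r$, and by pigeonholing keeps a quarter of them, say those whose right endpoint $b_j$ has $\varphi(b_j)=r/4$. Since each such interval contains a point $c_j$ with $\varphi(c_j)=r/2$, the MVT now gives a point $\xi_j\in(c_j,b_j)$ with $\varphi'(\xi_j)\le-\rho$ of a \emph{definite sign}; and because $\varphi$ must climb back from $r/4$ at $b_j$ to $r/2$ at $c_{j+1}$, there is $\eta_j\in(b_j,c_{j+1})$ with $\varphi'(\eta_j)\ge 0$. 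This forced sign alternation of $\varphi'$ is exactly what makes the count go through: between each $\eta_{j-1}$ and $\xi_j$ the intermediate value theorem produces a point of $E_\rho$, and the intervening zeros of $\varphi'$ guarantee the corresponding construction intervals are pairwise disjoint, yielding $N(\rho;\varphi')\ge M-1\ge\mathcal N/16$. I would encourage you to either carry out your monotonicity/absorption argument in full detail (it can be made to work but with more bookkeeping), or, more efficiently, adopt the sign-alternation selection, which replaces the delicate absorption count with a clean application of the intermediate value theorem.
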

\begin{proof}
Since the intervals $I_t$ corresponding to $\varphi$  and $r$ are disjoint, there must be at least $\mathcal N/2$ many of them that satisfy
$$|I_t|\le \frac{2}{\mathcal N}.$$
After perhaps excluding one such interval (the rightmost one), each of them has its right endpoint satisfying either $\varphi=\pm 2r$ or $\varphi=\pm r/4$.

Without loss of generality, assume that at least a quarter of these right endpoints satisfy $\varphi=r/4$. Denote them by $b_1<\cdots<b_M$, with
$$M>\frac{\frac{\mathcal N}{2}-1}{4}.$$
In each of the corresponding intervals there must be a point at which $\varphi=r/2$. Denote them by $c_1<\cdots<c_M$. By the mean value theorem, for $j=1,\cdots,M$, there exists $\xi_j\in (c_j,b_j)$ at which
$$\varphi'(\xi_j)\le\frac{\frac{r}{4}-\frac{r}{2}}{\frac{2}{\mathcal N}}=-\frac{\mathcal N}{8}r.$$
On the other hand, for $j=1,\cdots,M-1$, there exists $\eta_j\in (b_j,c_{j+1})$ at which
$$\varphi'(\eta_j)\ge\frac{\frac{r}{2}-\frac{r}{4}}{c_{j+1}-b_j}\ge 0.$$
Therefore, we must have
$$N\left (\frac{\mathcal N}{8}r;\varphi'\right )\ge M-1.$$
Since $\mathcal N\ge 20$, we have
$$M-1>\frac{\frac{\mathcal N}{2}-1}{4}-1\ge \frac{\mathcal N}{16},$$
and the conclusion follows.
\end{proof}

We also need the following lemma.

\begin{lem}\label{lem:first-variation-alpha}
Suppose $\varphi\in C^\alpha[0,1]$ with $\alpha\in(0,1]$. Then $$N(r;\varphi)\le C r^{-1/\alpha},\ r>0.$$
\end{lem}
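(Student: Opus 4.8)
The plan is to control $N(r;\varphi)$ by summing the lengths of the disjoint intervals $I_t$ and using the Hölder modulus of continuity of $\varphi$ to bound each length from below. First I would fix $r>0$ with $E_r\neq\emptyset$, and let $\{I_t\}$ be the associated finite disjoint family, so $N(r;\varphi)=\#\{I_t\}$. For each such interval, pick a point $s_t\in I_t\cap E_r$ (so $r/2\le|\varphi(s_t)|\le r$). I would then argue that $\varphi$ must, within $I_t$ or at its endpoints, come close to a value outside $[r/4,2r]$ in absolute value: by construction of $a_t$ and $b_t$, either one of the endpoints of $I_t$ is a point where $|\varphi|=r/4$ or $|\varphi|=2r$, or the endpoint coincides with $a$ or $b$. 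In the first case there is a point $s_t'\in\overline{I_t}$ with $\big||\varphi(s_t)|-|\varphi(s_t')|\big|\ge r/4$, hence $|\varphi(s_t)-\varphi(s_t')|\ge r/4$ (recall $\varphi$ has constant sign on $I_t$), and therefore
$$\frac{r}{4}\le |\varphi(s_t)-\varphi(s_t')|\le \|\varphi\|_{C^\alpha}\,|s_t-s_t'|^\alpha\le \|\varphi\|_{C^\alpha}\,|I_t|^\alpha,$$
which gives $|I_t|\ge c\,r^{1/\alpha}$ with $c=(4\|\varphi\|_{C^\alpha})^{-1/\alpha}$.

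The only intervals to which this lower bound does not immediately apply are those whose endpoints are forced to be $a$ or $b$ — but by Lemma~\ref{lem:finite} the $I_t$ are disjoint subintervals of $[0,1]$, so there are at most two such "boundary" intervals (one containing $a$, one containing $b$). For every other interval the estimate $|I_t|\ge c\,r^{1/\alpha}$ holds, and since the $I_t$ are disjoint and contained in $[0,1]$ we get
$$(N(r;\varphi)-2)\,c\,r^{1/\alpha}\le \sum_{t} |I_t|\le 1,$$
hence $N(r;\varphi)\le 2 + c^{-1}r^{-1/\alpha}\le C\,r^{-1/\alpha}$ for an appropriate constant $C=C(\alpha,\|\varphi\|_{C^\alpha})$ (absorbing the additive $2$ by enlarging $C$, valid for all $r>0$ since if $E_r=\emptyset$ then $N(r;\varphi)=0$). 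This is exactly the claimed bound.

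I expect the main obstacle to be purely bookkeeping rather than conceptual: one must be careful that in the definition of $I_t$ an endpoint genuinely realizes a value with $|\varphi|\in\{r/4,2r\}$ whenever that endpoint is interior to $[a,b]$ — this is where continuity of $\varphi$ (via the intermediate value theorem applied to the sets defining $a_t,b_t$) is used — and that the "at most two boundary intervals" reduction is stated cleanly. A minor point worth a line is that $\varphi$ keeps a fixed sign on $\overline{I_t}$, so that comparing $|\varphi|$ values translates into comparing $\varphi$ values, making the Hölder estimate directly applicable. None of these steps requires more than the continuity and Hölder hypothesis already assumed.
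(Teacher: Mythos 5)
Your proof is correct and rests on the same core estimate as the paper's: $\varphi$ must oscillate by at least $r/4$ over (essentially) each $I_t$, so the Hölder modulus forces $|I_t|\gtrsim r^{1/\alpha}$. The paper packages this via a pigeonhole step (at least $\mathcal N/2$ of the disjoint $I_t$ have length $\le 2/\mathcal N$, then one such small interval yields $r/4\le C(2/\mathcal N)^\alpha$), whereas you sum the lower bounds on $|I_t|$ directly and absorb the at-most-two boundary intervals; these are minor variants of the same argument.
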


\begin{proof}
Denote $\mathcal N=N(r;\varphi)$. As in the proof of Lemma \ref{lem:first-variation}, there must be at least $\mathcal N/2$ many of the $I_t$'s that satisfy
$$|I_t|\le \frac{2}{\mathcal N}.$$
Without loss of generality we may assume that $\mathcal N\ge 2$. It follows that there is at least one such $I_t$ on which
$$\sup_{I_t}\varphi - \inf_{I_t}\varphi \ge r/4.$$
On the other hand, since $\varphi\in C^\alpha[0,1]$,
$$\sup_{I_t}\varphi - \inf_{I_t}\varphi\le C |I_t|^\alpha\le C \left(\frac{2}{\mathcal N}\right)^\alpha.$$
Combining, we obtain
$$\mathcal N\le C r^{-1/\alpha},$$
as desired.
\end{proof}

Iterating Lemma \ref{lem:first-variation}, we see that, if $\varphi\in C^\ell[0,1]$ ($\ell\in\mathbb N$), then
\begin{equation}\label{eq:iterate}
N\left (\frac{\mathcal N^\ell}{8^\ell 4^{\ell(\ell-1)}}r;\varphi^{(\ell)}\right )\ge \frac{\mathcal N}{16^\ell}
\end{equation}
provided
\begin{equation}\label{eq:iterate-condition}
N(r;\varphi)\ge \mathcal N\ge 20\cdot 16^{\ell-1}.
\end{equation}
Combining this with Lemma \ref{lem:first-variation-alpha}, we obtain the following (which contains Lemma \ref{intro:variation} as a corollary).

\begin{lem}\label{lem:tempered-variation}
Suppose $\varphi\in C^{\alpha}[a,b]$ with $\alpha\in (0,\infty)$. Then $$N(r;\varphi)\le C r^{-1/\alpha},\ r>0.$$
In particular, if $\varphi\in C^\infty[a,b]$, then
$$N(2^{-k};\varphi)\le C_\delta 2^{\delta k},\ \forall \delta>0.$$
\end{lem}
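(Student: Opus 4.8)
The plan is to prove Lemma \ref{lem:tempered-variation} by splitting $\alpha\in(0,\infty)$ into its integer part and fractional part and then combining the iteration inequality \eqref{eq:iterate} with the base estimate of Lemma \ref{lem:first-variation-alpha}. Write $\alpha=\ell+\beta$ where $\ell\in\mathbb N\cup\{0\}$ and $\beta\in(0,1]$; if $\alpha\le 1$ the claim is exactly Lemma \ref{lem:first-variation-alpha}, so assume $\ell\ge 1$. Fix $r>0$ and set $\mathcal N=N(r;\varphi)$. If $\mathcal N<20\cdot 16^{\ell-1}$ there is nothing to prove (the bound $N(r;\varphi)\le Cr^{-1/\alpha}$ can be arranged by choosing $C$ large, since we only need it for, say, $r\le 1$, and for $r$ bounded below $N$ is bounded), so we may assume the hypothesis \eqref{eq:iterate-condition} of the iterated inequality holds.

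Next I would apply \eqref{eq:iterate} to the function $\varphi\in C^\ell[a,b]$ (after the harmless affine rescaling of the interval to $[0,1]$, which only changes constants): this gives
$$N\!\left(\frac{\mathcal N^\ell}{8^\ell 4^{\ell(\ell-1)}}\,r;\ \varphi^{(\ell)}\right)\ \ge\ \frac{\mathcal N}{16^\ell}.$$
Since $\varphi^{(\ell)}\in C^{\beta}[a,b]$ with $\beta\in(0,1]$, Lemma \ref{lem:first-variation-alpha} applied to $\varphi^{(\ell)}$ bounds the left-hand side from above:
$$\frac{\mathcal N}{16^\ell}\ \le\ N\!\left(\frac{\mathcal N^\ell r}{8^\ell 4^{\ell(\ell-1)}};\ \varphi^{(\ell)}\right)\ \le\ C\left(\frac{\mathcal N^\ell r}{8^\ell 4^{\ell(\ell-1)}}\right)^{-1/\beta}\ =\ C' \,\mathcal N^{-\ell/\beta}\, r^{-1/\beta}.$$
Rearranging, $\mathcal N^{1+\ell/\beta}\le C'' r^{-1/\beta}$, i.e. $\mathcal N^{(\beta+\ell)/\beta}\le C'' r^{-1/\beta}$, and raising both sides to the power $\beta/(\beta+\ell)=\beta/\alpha$ yields $\mathcal N\le C r^{-1/\alpha}$, which is the desired estimate. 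The final assertion about $C^\infty$ functions follows since then $\varphi\in C^\alpha$ for every $\alpha$, so $N(2^{-k};\varphi)\le C_\alpha 2^{k/\alpha}$, and given $\delta>0$ one chooses $\alpha>1/\delta$.

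The only genuinely delicate point is the bookkeeping in the iteration \eqref{eq:iterate} itself, which is already asserted in the excerpt as following from iterating Lemma \ref{lem:first-variation}; assuming that, the present argument is just an interpolation between the integer-order statement and the Hölder-order statement. One should be slightly careful that each application of Lemma \ref{lem:first-variation} in the iteration requires its hypothesis $\mathcal N\ge 20$ at the appropriate scale — but this is exactly what the condition \eqref{eq:iterate-condition} ($\mathcal N\ge 20\cdot 16^{\ell-1}$) encodes, since at the $j$-th step the lower bound on the count is $\mathcal N/16^{j}\ge 20$. The other mild nuisance is the restriction of Lemma \ref{lem:first-variation} to the interval $[0,1]$: for a general compact $[a,b]$ one rescales, and since $N(r;\varphi)$ depends on $\varphi$ only through its sublevel/superlevel structure, an affine change of the $t$-variable leaves $N(r;\varphi)$ unchanged while $N(r;\varphi^{(\ell)})$ changes by a fixed power of $b-a$, absorbed into $C$. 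I do not expect any real obstacle beyond these constant-chasing issues.
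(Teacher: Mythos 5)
Your argument is correct and follows the paper's proof essentially verbatim: reduce to $[0,1]$, write $\alpha=\ell+\beta$, dispose of the case $\mathcal N<20\cdot 16^{\ell-1}$ by enlarging $C$, then combine the iterated inequality \eqref{eq:iterate} with Lemma \ref{lem:first-variation-alpha} applied to $\varphi^{(\ell)}\in C^\beta$ and solve for $\mathcal N$. The algebra and the treatment of the $C^\infty$ case both match the paper.
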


\begin{proof}
By an affine change of variable we may assume that $[a,b]=[0,1]$. The case $\alpha\le 1$ follows from Lemma \ref{lem:first-variation-alpha}. If $\alpha\in (1,\infty)$, we can write
$$\alpha=\ell+\beta$$
where $\ell\in\mathbb N$ and $\beta\in (0,1]$. Let $\mathcal N=N(r;\varphi)$. Since $\varphi$ is bounded, we may assume without loss of generality that \eqref{eq:iterate-condition} is satisfied. By \eqref{eq:iterate} and Lemma \ref{lem:first-variation-alpha}, it follows that
$$\frac{\mathcal N}{16^\ell}\le C\left (\frac{\mathcal N^\ell}{8^\ell 4^{\ell(\ell-1)}}r\right)^{-1/\beta}.$$
From this we obtain (with a different constant $C$)
$$\mathcal N\le C r^{-1/\alpha},$$
as desired.
\end{proof}

\section{Proof of Theorem \ref{intro:thm}}\label{theorem}
We now prove a more general version Theorem \ref{intro:thm} using Lemma \ref{intro:lemma} and Lemma \ref{lem:tempered-variation}. We first make the observation that Lemma \ref{intro:lemma} remains valid if the condition \eqref{eq:nondegenerate} is replaced by
$$1/4\le |\phi^{(n)}(t)|\le 2,\ t\in I,$$
since the proof of Lemma \ref{intro:lemma} does not rely on the exact values of the constant bounds. Observe also that, for $r>0$, by considering the following affine image of the original curve
$$\gamma_r(t)=\big(t,t^2,\cdots,t^{n-1},r^{-1}\phi(t)\big),$$
Lemma \ref{intro:lemma} implies
\begin{equation}\label{eq:r-torsion}
\left(\int_I \big|\hat f(\gamma(t))\big|^q dt\right)^{1/q}\le C\, r^{-\frac{1}{p'}}\|f\|_{L^p(\mathbb R^n)}
\end{equation}
in the same rage \eqref{eq:endline}, provided
$$r/4\le |\phi^{(n)}(t)|\le 2r,\ t\in I.$$
sFurthermore, by H\"older's inequality, the range \eqref{eq:endline} can be extended to \eqref{eq:range-1} if $I$ a finite interval.

Now suppose $I$ is a compact interval, $\varepsilon>-\frac{2}{n^2+n}$, and $p, q$ satisfy \eqref{eq:range-1}. Then we can write
$$\int_I \big|\hat f(\gamma(t))\big|^q w_\varepsilon(t) dt
=\sum_{k\ge k_0} \int_{E_k} \big|\hat f(\gamma(t))\big|^q w_\varepsilon(t) dt$$
where $k_0\in\mathbb Z$ depends only on $\|\phi^{(n)}\|_\infty$ and
$$E_k=\{t\in I: 2^{-k-1}<|\phi^{(n)}(t)|\le 2^{-k}\}.$$
For every fixed $k$, let $\{I_{k,j}\}_{j=1}^{N_k}$ be as in Section \ref{variation} associated with $\varphi=\phi^{(n)}$ and $r=2^{-k}$. Then we can bound
\begin{align*}
\int_{E_k} \big|\hat f(\gamma(t))\big|^q w_\varepsilon(t) dt
&\le \sum_{j=1}^{N_k} \int_{I_{k,j}} \big|\hat f(\gamma(t))\big|^q w_\varepsilon(t) dt\\
&\le C 2^{-k\big(\frac{2}{n^2+n}+\varepsilon\big)}
\sum_{j=1}^{N_k} \int_{I_{k,j}} \big|\hat f(\gamma(t))\big|^q dt.
\end{align*}
By \eqref{eq:r-torsion}, we can bound each
$$\int_{I_{k,j}} \big|\hat f(\gamma(t))\big|^q dt\le C 2^{k\frac{q}{p'}} \|f\|_{L^p(\mathbb R^n)}^q.$$
Therefore,
$$\int_I \big|\hat f(\gamma(t))\big|^q w_\varepsilon(t) dt
\le C \left(\sum_{k\ge k_0} N_k 2^{-k\big(\frac{2}{n^2+n}+\varepsilon\big)} 2^{k\frac{q}{p'}}\right) \|f\|_{L^p(\mathbb R^n)}^q.$$
Combining this with Lemma \ref{lem:tempered-variation}, we obtain the following.

\begin{thm}\label{lem:Holder}
Let $I$ be a compact interval and let $\alpha>n$.
Suppose $\gamma$ is a simple curve as in \eqref{eq:simple-curve} with $\phi\in C^\alpha(I)$. Then the restriction bound \eqref{eq:restriction} holds with $w(t)=w_\varepsilon(t)$, provided
\begin{equation}\label{eq:range-eps-A}
\frac{2}{n^2+n}+\varepsilon>\frac{q}{p'}+\frac{1}{\alpha-n}
\end{equation}
and
\begin{equation}\label{eq:range-A}
1\le p<\frac{n^2+n+2}{n^2+n},\quad 1\le q\le\frac{2}{n^2+n}p'.
\end{equation}
\end{thm}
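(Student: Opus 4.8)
The plan is to assemble Theorem~\ref{lem:Holder} directly from the pieces already developed: the rescaled uniform bound \eqref{eq:r-torsion} coming from Lemma~\ref{intro:lemma}, H\"older's inequality to widen the $q$-range, the dyadic decomposition of $I$ according to the size of $\phi^{(n)}$, the covering of each dyadic level set by the intervals $I_{k,j}$ from Section~\ref{variation}, and finally the counting bound $N_k=N(2^{-k};\phi^{(n)})\le C 2^{k/(\alpha-n)}$ furnished by Lemma~\ref{lem:tempered-variation} applied to $\varphi=\phi^{(n)}\in C^{\alpha-n}(I)$.

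First I would reduce the first inequality in \eqref{eq:range-A}, $q\le \tfrac{2}{n^2+n}p'$, to the endpoint case $q=\tfrac{2}{n^2+n}p'$ (which is \eqref{eq:endline}) by H\"older's inequality on the finite interval $I_{k,j}$, exactly as indicated in the paragraph preceding the theorem; this is why the endpoint estimate \eqref{eq:r-torsion} suffices on each piece even though we want the whole triangle \eqref{eq:range-A}. Then, on a level set $E_k=\{2^{-k-1}<|\phi^{(n)}|\le 2^{-k}\}$, on each $I_{k,j}$ one has $2^{-k-2}\le |\phi^{(n)}(t)|\le 2^{-k+1}$ (the containment property from Lemma~\ref{intro:variation}/Section~\ref{variation}), so \eqref{eq:r-torsion} with $r=2^{-k}$ gives $\int_{I_{k,j}}|\hat f(\gamma(t))|^q\,dt\le C\,2^{kq/p'}\|f\|_{L^p}^q$; summing over $j$ and absorbing the weight $w_\varepsilon(t)\le C\,2^{-k(2/(n^2+n)+\varepsilon)}$ on $E_k$ yields
\[
\int_{E_k}|\hat f(\gamma(t))|^q w_\varepsilon(t)\,dt \le C\,N_k\,2^{-k(\frac{2}{n^2+n}+\varepsilon)}\,2^{k\frac{q}{p'}}\,\|f\|_{L^p}^q.
\]
Summing over $k\ge k_0$, convergence of the geometric-type series is guaranteed precisely when the exponent $-\big(\tfrac{2}{n^2+n}+\varepsilon\big)+\tfrac{q}{p'}+\tfrac{1}{\alpha-n}$ is negative, which is the hypothesis \eqref{eq:range-eps-A}; here the $2^{k/(\alpha-n)}$ factor is the contribution of $N_k$ via Lemma~\ref{lem:tempered-variation}, and $\alpha>n$ ensures $\alpha-n>0$ so that $\phi^{(n)}\in C^{\alpha-n}$ and the lemma applies.

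The only genuinely delicate points, rather than a single hard obstacle, are bookkeeping ones: verifying that $k_0$ (hence the tail where the decomposition starts) depends only on $\|\phi^{(n)}\|_\infty$ and not on finer data of $\phi$, so that the constant stays uniform; checking that the affine rescaling $\gamma_r(t)=(t,\dots,t^{n-1},r^{-1}\phi(t))$ converts the constant-torsion bound into the factor $r^{-1/p'}=2^{k/p'}$ in \eqref{eq:r-torsion} with the correct power (this is just tracking how the $L^{p'}$-norm of $\mathcal E$ scales under dilation in the last coordinate); and confirming that the slightly enlarged nondegeneracy window $1/4\le|\phi^{(n)}|\le 2$ is admissible in Lemma~\ref{intro:lemma}, which the excerpt already notes holds since the proof there never used the exact constants $1/2$ and $1$. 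Once these are in place the proof is just the displayed chain of inequalities followed by the geometric series test \eqref{eq:range-eps-A}.
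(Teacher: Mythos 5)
Your proposal is correct and follows essentially the same route as the paper's proof: the rescaled endpoint estimate \eqref{eq:r-torsion} from Lemma~\ref{intro:lemma}, H\"older to cover the sub-endpoint range of $q$, the dyadic decomposition by level sets of $\phi^{(n)}$ and the covering intervals $I_{k,j}$, and the counting bound $N_k\lesssim 2^{k/(\alpha-n)}$ from Lemma~\ref{lem:tempered-variation} applied to $\phi^{(n)}\in C^{\alpha-n}$, with \eqref{eq:range-eps-A} being exactly the convergence condition for the resulting geometric series. The bookkeeping points you flag (the $1/4\le|\phi^{(n)}|\le2$ window, the $r^{-1/p'}$ scaling factor, the dependence of $k_0$ only on $\|\phi^{(n)}\|_\infty$) are the same ones the paper addresses in the observations preceding the displayed chain of estimates.
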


Applying Theorem \ref{lem:Holder} with $\alpha\to\infty$, this proves Theorem \ref{intro:thm}.

\section{Proof of Proposition \ref{intro:prop}}\label{prop}
We now prove Proposition \ref{intro:prop}. It is clear that for any $\alpha,\beta>0$,
\begin{equation}\label{eq:alpha-beta-phi}
\phi(t)=e^{-t^{-\alpha}}\sin(t^{-\beta})\in C^\infty(I)
\end{equation}
where $I=[0,1]$. Therefore, by Theorem \ref{intro:thm} the restriction bound \eqref{eq:restriction} holds in cases $(i)$ and $(ii)$ of Theorem \ref{intro:thm}. 

Now assume $w(t)=w_0(t)$, $p>1$, and $q=\frac{2}{n^2+n}p'$. We are going to show that \eqref{eq:restriction} fails for $\beta>\frac{(n+1)}{2}\alpha$. We do this by examining Knapp type examples. Fix a function $\chi\in C^\infty_c(\mathbb R)$ that satisfies $\chi(t)=1,\ |t|\le 1$. For $\delta\in(0,1)$, let
$$\widehat f_\delta(\xi)=\chi\left (\frac{\xi_1}{\delta}\right )
\chi\left (\frac{\xi_2}{\delta^2}\right )
\cdots
\chi\left (\frac{\xi_{n-1}}{\delta^{n-1}}\right )
\chi\left (\frac{\xi_{n}}{e^{-\delta^{-\alpha}}}\right ).$$
By a simple dilation argument, we have
$$\|f_\delta\|_{L^p(\mathbb R^n)}
\le C \delta^{1/p'}\delta^{2/p'}\cdots\delta^{(n-1)/p'} e^{-\delta^{-\alpha}/p'}$$
On the other hand, since $|\sin(t^{-\beta})|\le 1$, we have
$$\widehat f_\delta(\gamma(t))=1,\ 0\le t\le \delta$$
and therefore
$$\left(\int_0^{1} \big|\widehat f_\delta(\gamma(t))\big|^q \big|\phi^{(n)}(t)\big|^{\frac{2}{n(n+1)}} dt\right)^{1/q}
\ge \left(\int_0^{\delta} \big|\phi^{(n)}(t)\big|^{\frac{2}{n(n+1)}} dt\right)^{1/q}.$$
Combining these, we see that, if \eqref{eq:restriction} holds, then
\begin{equation}\label{eq:knapp}
\int_0^{\delta} \big|\phi^{(n)}(t)\big|^{\frac{2}{n(n+1)}} dt
\le C \delta^{\frac{n-1}{n+1}} e^{-\frac{2}{n(n+1)}\delta^{-\alpha}}.
\end{equation}

To estimate the left-hand side, we will prove the following lemma in the Appendix.

\begin{lem}\label{lem:sjolin-integral}
Let $\phi$ be given by \eqref{eq:alpha-beta-phi} with $\beta>\alpha$ and let $\rho>0$. Then
$$\int_0^{\delta} \big|\phi^{(n)}(t)\big|^{\rho} dt
\ge C \delta^{-\rho n (\beta+1)+1+\alpha} e^{-\rho \delta^{-\alpha}}
,\ \text{as } \delta\to 0^+.$$
\end{lem}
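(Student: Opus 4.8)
The plan is to analyze the $n$-th derivative of $\phi(t)=e^{-t^{-\alpha}}\sin(t^{-\beta})$ near $t=0$ and show that on a suitable sequence of small subintervals of $(0,\delta)$ the quantity $|\phi^{(n)}(t)|$ is bounded below by roughly $e^{-t^{-\alpha}}t^{-n(\beta+1)}$, then integrate. First I would differentiate: writing $\phi = e^{-t^{-\alpha}}\cdot\sin(t^{-\beta})$ and applying the Leibniz rule, each of the $n$ derivatives falling on $\sin(t^{-\beta})$ produces a factor comparable to $t^{-\beta-1}$ (from the chain rule $\frac{d}{dt}t^{-\beta} = -\beta t^{-\beta-1}$, together with lower-order terms that are smaller as $t\to0$), while derivatives falling on $e^{-t^{-\alpha}}$ produce factors comparable to $t^{-\alpha-1}$, which since $\beta>\alpha$ are of strictly smaller order. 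Hence the dominant term in $\phi^{(n)}(t)$ is $c\, e^{-t^{-\alpha}} t^{-n(\beta+1)}$ times a trigonometric factor (a bounded combination of $\sin(t^{-\beta})$ and $\cos(t^{-\beta})$ with bounded coefficients), and one needs to control when that trigonometric factor is bounded away from zero.

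The key technical step is to make the heuristic ``$|\phi^{(n)}(t)|\gtrsim e^{-t^{-\alpha}}t^{-n(\beta+1)}$'' rigorous on a set of positive measure. I would organize the computation as $\phi^{(n)}(t) = e^{-t^{-\alpha}}\big(P(t)\cos(t^{-\beta}) + Q(t)\sin(t^{-\beta})\big)$ where $P,Q$ are finite sums of terms of the form $t^{-m}$ with the top-order term being $\pm(\beta)^n t^{-n(\beta+1)}$ (coming from the term where all derivatives hit the sine), so that for $t$ small one of $P,Q$ — say $Q$, after tracking which term dominates — satisfies $|Q(t)|\ge c\, t^{-n(\beta+1)}$ and $|P(t)|\le C\, t^{-n(\beta+1)}$ as well. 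Then $|\phi^{(n)}(t)|\ge c\, e^{-t^{-\alpha}} t^{-n(\beta+1)}$ whenever $|\sin(t^{-\beta})|\ge 1/2$, say. On the interval $(\delta/2,\delta)$, the phase $t^{-\beta}$ ranges over an interval of length comparable to $\delta^{-\beta}$, which tends to $\infty$; the set where $|\sin(t^{-\beta})| \ge 1/2$ is, after the change of variables $s = t^{-\beta}$ (whose Jacobian is $\sim \delta^{-\beta-1}$ on this range), a union of arcs occupying a fixed fraction of the $s$-interval, hence pulls back to a subset of $(\delta/2,\delta)$ of measure $\gtrsim \delta$.

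Putting this together: on that subset of measure $\gtrsim\delta$ contained in $(\delta/2,\delta)$ we have, using $t\sim\delta$ and $e^{-t^{-\alpha}}\ge e^{-(\delta/2)^{-\alpha}}$, the bound $|\phi^{(n)}(t)|^\rho \ge c\, \delta^{-\rho n(\beta+1)} e^{-\rho(\delta/2)^{-\alpha}}$. Hence
$$\int_0^\delta |\phi^{(n)}(t)|^\rho\,dt \ge c\,\delta\cdot \delta^{-\rho n(\beta+1)} e^{-\rho(\delta/2)^{-\alpha}}.$$
This has the right power of $\delta$ and the right exponential order up to the constant $2^\alpha$ in the exponent; since the claimed bound in Lemma \ref{lem:sjolin-integral} carries an extra factor $\delta^{\alpha}$ and $e^{-\rho\delta^{-\alpha}}$ rather than $e^{-\rho(\delta/2)^{-\alpha}}$, one would either sharpen the lower endpoint of the integration interval to $(\delta(1-c\delta^{\alpha}),\delta)$ — on which $e^{-t^{-\alpha}}\ge c\, e^{-\delta^{-\alpha}}$ and the phase still sweeps an interval of length $\gtrsim \delta^{\alpha}\cdot\delta^{-\beta-1}\to\infty$ because $\beta>\alpha$ forces $\delta^{\alpha-\beta-1}\to\infty$ faster — yielding measure $\gtrsim \delta^{1+\alpha}$ of the favorable set, which produces exactly $\delta^{1+\alpha-\rho n(\beta+1)}e^{-\rho\delta^{-\alpha}}$. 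The main obstacle is the bookkeeping in the second paragraph: verifying that after the Leibniz expansion the coefficient functions $P(t),Q(t)$ really are dominated by their single top-order monomial $t^{-n(\beta+1)}$ with a \emph{nonzero} leading coefficient (no miraculous cancellation among the $\binom{n}{k}$ terms), and that the favorable set for $|\sin(t^{-\beta})|\ge 1/2$ genuinely has the claimed measure on the shrunken interval, i.e. that the phase variation $\delta^{-\beta}-(\delta(1-c\delta^\alpha))^{-\beta}\sim c\,\delta^{\alpha-\beta}\to\infty$; both are routine once $\beta>\alpha$ is used, but require care.
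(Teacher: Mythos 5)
Your argument is correct and takes a genuinely different (and arguably more elementary) route than the paper's. The paper also begins from the representation $\phi^{(n)}(t)=e^{-t^{-\alpha}}\bigl(P(t^{-1})\sin(t^{-\beta})+Q(t^{-1})\cos(t^{-\beta})\bigr)$ with $\max\{\deg P,\deg Q\}=n(\beta+1)$, but then substitutes $u=t^{-\beta}$, writes the trigonometric part in amplitude--phase form $\sqrt{P_0^2+Q_0^2}\,\cos\bigl(u+\theta(u)\bigr)$ with $\theta'(u)\to0$, performs a second substitution $v=u+\theta(u)$ to make the phase linear, bounds the resulting integral over $(\delta^{-\beta},\infty)$ from below by a sum over integer multiples of $2\pi$, and finishes with an integration-by-parts asymptotic for the incomplete-Gamma-type integral $\int_{w_0}^\infty e^{-w}w^{\sigma}\,dw$. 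You instead localize to the shrinking interval $\bigl(\delta(1-c\delta^\alpha),\delta\bigr)$, where $e^{-\rho t^{-\alpha}}\gtrsim e^{-\rho\delta^{-\alpha}}$ and $t^{-\rho n(\beta+1)}\ge\delta^{-\rho n(\beta+1)}$, observe that the phase $t^{-\beta}$ sweeps a length $\sim\delta^{\alpha-\beta}\to\infty$ so that the dominant trigonometric factor is $\ge 1/2$ on a subset of measure $\gtrsim\delta^{1+\alpha}$, and then simply multiply measure by the pointwise lower bound; this produces the exact exponent $\delta^{-\rho n(\beta+1)+1+\alpha}e^{-\rho\delta^{-\alpha}}$ with no loss. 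The worries you flag at the end are indeed routine: the Leibniz term $k=0$ (all derivatives hitting $\sin(t^{-\beta})$) has strictly higher degree than all $k\ge1$ terms by $\beta-\alpha>0$, and within it the single leading monomial has coefficient $(-\beta)^n\ne0$, so there is no cancellation and exactly one of $P,Q$ has full degree $n(\beta+1)$ while the other is of strictly lower order; and the phase-sweep computation (note a small inconsistency: the sweep is $\sim\delta^{1+\alpha}\cdot\delta^{-\beta-1}=\delta^{\alpha-\beta}$, not $\delta^{\alpha-\beta-1}$, though both diverge) is as you say. Your approach avoids the amplitude--phase reparametrization and the Gamma-function asymptotics entirely; the paper's approach is more systematic and would extend more naturally to computing the precise asymptotic constant or handling non-monomial amplitude envelopes.
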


Applying Lemma \ref{lem:sjolin-integral} with $\rho=\frac{2}{n(n+1)}$ to \eqref{eq:knapp}, we obtain
$$\delta^{-\frac{2}{n+1}(\beta+1)+1+\alpha}\le C \delta^{\frac{n-1}{n+1}}
,\ \text{as } \delta\to 0^+,$$
which gives rise to a contradiction if 
$$\beta>\frac{n+1}{2}\alpha.$$
This completes the proof of Proposition \ref{intro:prop}.

\section{Appendix}\label{appendix}
In this section we give a proof of Lemma \ref{lem:sjolin-integral}. We first observe that, since $\beta>\alpha$, by induction on $n$ we have
$$\phi^{(n)}(t)=e^{-t^{-\alpha}}\left( P(t^{-1})\sin(t^{-\beta})+Q(t^{-1})\cos(t^{-\beta})\right )$$
where $P$ and $Q$ are `polynomials' consisting of fractional powers and 
$$\max\{\text{deg}\, P, \text{deg}\, Q\}=n(\beta+1).$$
Write $u=t^{-\beta}$. Then
$$\phi^{(n)}(t)=e^{-u^{\frac{\alpha}{\beta}}} u^{\frac{n(\beta+1)}{\beta}} \psi(u)$$
where
$$\psi(u)=P_0(u)\sin(u)+Q_0(u)\cos(u)$$
with $P_0$ and $Q_0$ being two `polynomials' as above satisfying
$$\max\{\text{deg}\, P_0, \text{deg}\, Q_0\}=0.$$
For sufficiently large $u$, we can write
$$\psi(u)=\sqrt{P_0(u)^2+Q_0(u)^2} \cos\big(u+\theta(u)\big)$$
where
$$\lim_{u\to\infty} \sqrt{P_0(u)^2+Q_0(u)^2}>0$$
and $\theta(u)$ satisfies
$$\lim_{u\to\infty} \theta'(u)=0,
\quad \lim_{u\to\infty} \theta(u)
=\begin{cases}
0  &\text{ if } \text{deg}\, Q_0=0,\\
-\frac{\pi}{2} &\text{ if } \text{deg}\, P_0=0.
\end{cases}$$
Therefore, changing to the variable $u$, for sufficiently small $\delta$ we have
\begin{align*}
\int_0^{\delta} \big|\phi^{(n)}(t)\big|^{\rho} dt
&=\frac{1}{\beta}\int_{\delta^{-\beta}}^\infty  
e^{-\rho u^{\frac{\alpha}{\beta}}}  u^{\rho\frac{n(\beta+1)}{\beta}-\frac{1}{\beta}-1} |\psi(u)|^\rho du\\
&\ge C \int_{\delta^{-\beta}}^\infty
e^{-\rho u^{\frac{\alpha}{\beta}}}
u^{\rho\frac{n(\beta+1)}{\beta}-\frac{1}{\beta}-1} |\cos\big(u+\theta(u)\big)|^\rho du.
\end{align*}
Changing to the variable $v=u+\theta(u)$, since $\beta>\alpha$, the last integral can be bounded below by
$$C \int_{\delta^{-\beta}+\theta(\delta^{-\beta})}^\infty
e^{-\rho v^{\frac{\alpha}{\beta}}}
v^{\rho\frac{n(\beta+1)}{\beta}-\frac{1}{\beta}-1} |\cos(v)|^\rho dv.$$
Choose $k\in\mathbb N$ such that
$$2(k-1)\pi\le\delta^{-\beta}+\theta(\delta^{-\beta})\le 2k\pi.$$
Then the integral above can be bounded below by
\begin{align*}
\int_{2k\pi}^\infty
e^{-\rho v^{\frac{\alpha}{\beta}}}
v^{\rho\frac{n(\beta+1)}{\beta}-\frac{1}{\beta}-1} |\cos(v)|^\rho dv
\ge & C\sum_{j=k}^\infty
e^{-\rho (2j\pi)^{\frac{\alpha}{\beta}}}
(2j\pi)^{\rho\frac{n(\beta+1)}{\beta}-\frac{1}{\beta}-1}\\
\ge & C\int_{2k\pi}^\infty
e^{-\rho v^{\frac{\alpha}{\beta}}}
v^{\rho\frac{n(\beta+1)}{\beta}-\frac{1}{\beta}-1} dv.
\end{align*}
Letting $w=\rho v^{\frac{\alpha}{\beta}}$, the last integral becomes
$$C\int_{\rho(2k\pi)^{\frac{\alpha}{\beta}}}^\infty
e^{-w}
w^{\rho\frac{n(\beta+1)}{\alpha}-\frac{1}{\alpha}-1} dw.$$
Using integrating by parts, this can be bounded below by
$$C (2k\pi)^{\rho\frac{n(\beta+1)}{\beta}-\frac{1}{\beta}-\frac{\alpha}{\beta}}
e^{-\rho(2k\pi)^{\frac{\alpha}{\beta}}}.$$
Since $2k\pi$ and $\delta^{-\beta}$ differ by at most a constant, we can further bound this below by
$$C \delta^{-\rho{n(\beta+1)}+1+{\alpha}}
e^{-\rho \delta^{-\alpha}},$$
which completes the proof of Lemma \ref{lem:sjolin-integral}.

\bibliographystyle{abbrv}
\bibliography{bibliography}
\nocite{Steinig1971}
\nocite{Fefferman1970,Zygmund1974,Prestini1978,Wright1992,Mockenhaupt,BakLee2004}

\end{document}